\newtheorem{theorem}{Theorem}[section]
\newtheorem{lemma}{Lemma}[section]
\newtheorem{prop}{Proposition}[section]
\newtheorem{cor}{Corollary}[section]
{\theorembodyfont{\rmfamily}
\newtheorem{example}{Example}[section]
\newtheorem{remark}{Remark}[section]
}
\newenvironment{proof}{\noindent\text{\textbf{Proof.\:}}}{}
\newcommand{\N}{\mathbb{N}}
\newcommand{\R}{\mathbb{R}}
\newcommand{\T}{\mathbb{T}}
\newcommand{\C}{\mathcal{C}}
\newcommand{\Om}{\Omega}
\newcommand{\footnoteremember}[2]{\footnote{#2}\newcounter{#1}\setcounter{#1}{\value{footnote}}}
\def\qed{\hfill $\square$ \goodbreak \bigskip}
\def\eps{\varepsilon}
\title{Polygons as optimal shapes with convexity constraint}
\author{Jimmy Lamboley\footnoteremember{1}{ENS Cachan Bretagne, IRMAR, UEB, av Robert Schuman, 35170 Bruz, France}, Arian Novruzi\footnote{University of Ottawa, Department of Mathematics and Statistics, 585 King Edward, Ottawa, ON, K1N 6N5, Canada}}
\date{October 2008}
\begin{document}
\maketitle
\begin{abstract}
In this paper, we focus on the following general shape optimization problem:
$$
\min\{J(\Om),\ \Om\ convex,\ \Omega\in\mathcal S_{ad}\},
$$
where $\mathcal S_{ad}$ is a set of 2-dimensional admissible shapes and $J:\mathcal{S}_{ad}\rightarrow\R$ is a shape functional.
Using a specific parameterization of the set of convex domains, we derive some extremality conditions (first and second order) for this kind of problem.
Moreover, we use these optimality conditions to prove that, for a large class of functionals (satisfying a concavity like property), any solution to this shape optimization problem is a polygon.\\

{\it Keywords:\,} Shape optimization, convex constraint, optimality conditions.
\smallskip

\end{abstract}

\section{Introduction}
In this paper, we are mainly interested in questions related to a convexity constraint in shape optimization. We deal with the following general shape optimization problem: 
\begin{equation}\label{eq:Pb1}
\min\{J(\Om),\ \Om\ convex,\ \Omega\in\mathcal S_{ad}\},
\end{equation}
where $J$ is a shape functional defined on a class $\mathcal{S}_{ad}$ of subsets of $\R^2$.\\

Our goal is, on one hand, to write down explicit first and second order optimality conditions for general 2-dimensional shape optimization problems with convexity constraint and, on the other hand, to use them to exhibit a family of shape functionals for which optimal shapes are polygons.\\

As it is well-known, dimension 2 allows to write the convexity constraint through the positivity of a {\bf linear} operator with respect to the shape. More precisely, if one uses polar coordinates representation $(r,\theta)$ for the domains, namely
\begin{equation}\label{eq:Omega_u}
\Om_u:=\left\{(r,\theta)\in [0,\infty)\times\R\;;\;r<\frac{1}{u(\theta)}\right\},
\end{equation}
where $u$ is a positive  and $2\pi$-periodic function, then
$$\Om_u\textrm{ is convex }\Longleftrightarrow u''+u\geq 0.$$

As a consequence, we look at shape optimization problems of the form
\begin{equation}\label{eq:Pb2}
u_0\in \mathcal F_{ad}\;,\;j(u_0)=\min \{j(u):=J(\Om_u),\;u>0,\;u''+u\geq0,\;u\in \mathcal F_{ad}\}
\end{equation}
where $\mathcal F_{ad}$ is a set of convenient $2\pi$-periodic admissible functions.\\

A first contribution is to write down explicitly the first and second order optimality conditions on $u_0$ for some specific choices of $\mathcal F_{ad}$. Then, we use these conditions to address the following question: let us consider the functional
$$J(\Om_u)=j(u)=\int_0^{2\pi} G\left(\theta,u(\theta),u'(\theta)\right) d\theta$$
where $G:\R\times(0,+\infty)\times\R\to\R$ is $\C^2$, $2\pi$-periodic in the first variable, and $j$ is defined on some set of functions $\mathcal F_{ad}$ as above. Then, is it possible to write down sufficient conditions on $G$ so that any optimal shape of \eqref{eq:Pb2} be a polygon?

This question is motivated by two preliminary results in this direction arising from two quite different fields:
\begin{itemize}
\item first a result by M. Crouzeix in \cite{C05Une}, motivated by abstract operator theory: a problem of the form \eqref{eq:Pb2} is considered where $G(\theta,u,u')=h(u'/u)$ with $h$ strictly concave and even, and $\mathcal F_{ad}=\{u\textrm{ regular enough };\; 0<a\leq u\leq b\}$. All optimal shapes are shown to be polygons,
\item then a result by T. Lachand-Robert and M.A. Peletier for a shape optimization arising in the modeling of the movement of a body inside a fluid (Newton's problem, see \cite{LP01New} and references therein). Here $G(\theta,u,u')=h_1(u)-u'^2h_2(u)$ and $\mathcal F _{ad}=\{u\textrm{ regular enough };\; 0<a\leq u\leq b\}$. Again, with convenient assumptions on $h_1$ and $h_2$, they prove that optimal shapes are polygons.
\end{itemize}
We also refer to T. Bayen \cite{B07Opt} for results about minimizing functionals of type $j$ with similar constraints, seen as controls.

Our goal here is  to generalize these two results and to find rather general sufficient conditions on $G$ which will imply that optimal shapes are necessarily polygons.
We state three results in this direction in the next section. It turns out that a main step in the proof is based on the use of the second order optimality conditions with convexity constraint. This is the main reason why we write down explicitly these conditions, which are actually interesting for themselves and which may also be useful in some other problems (see \cite{HO03Min} for the use of the first order optimality condition on a particular problem of optimal eigenvalue with convexity constraint). They imply that optimal shapes are necessarily polygons inside the constraints (see Theorem \ref{th:concave}). Next, to deal with the solution on the constraint, additional assumptions are needed on the boundary of the constraints (see Theorems \ref{th:polygon-g} and \ref{th:polygon-g-bis}). The sufficient conditions that we obtain on $G$, are rather sharp as shown through several examples and counterexamples.\\

We state sufficient conditions on $G$ for solutions to be polygons in the following section. Then, Section \ref{sect:optimality-conditions} is devoted to the ``abstract'' first and second order optimality conditions for convexity constraint. Proofs of the results in Section \ref{sect:result} are given in Section 4. Finally, we give examples and counterexamples in Section \ref{sect:sharpness} which show how sharp our Section \ref{sect:result} results are.

\section{Main results}\label{sect:result}
{\bf Notation:} $\T:=[0,2\pi)$. Throughout the paper, any function defined on $\T$ is considered as the restriction to $\T$ of a $2\pi$-periodic function on $\R$, with the same regularity.\\
Let $W^{1,\infty}(\T):=\{u\in\;W^{1,\infty}_{loc}(\R)\;/\;u\textrm{ is }2\pi\textrm{-periodic}\}$. If $u\in W^{1,\infty}(\T)$, we say that $u''+u\geq0$ if
\begin{equation}\label{eq:u''+u>=0}
\forall\; v\in W^{1,\infty}(\T)\textrm{ with }v\geq0,\;\;\int_{\T} \left(uv - u'v'\right)d\theta\geq0.
\end{equation}
In this case, $u''+u$ is a nonnegative $2\pi$-periodic measure on $\R$; we then denote $S_u=Supp(u''+u)$ the support of this measure.\\

As explained in the introduction, using the parameterization \eqref{eq:Omega_u}, we consider all open bounded shapes $(\Omega_u)_{u>0}$. A simple calculus of the curvature gives:
$$\kappa(\Om_u)=\frac{u''+u}{\left(1+u'^2\right)^{3/2}},$$
which leads to the characterization of the convexity of $\Om_u$ by the linear condition $u''+u\geq0$. Moreover, straight lines in $\partial\Om_u$ are parameterized by the set $\{u''+u=0\}$, and corners in the boundary are seen as Dirac masses in the measure $u''+u$.\\

\noindent
We consider, as in \cite{LP01New,C05Une}, the geometric constraint $\partial\Om_u\subset A(a,b)$ where $A(a,b):=\{(r,\theta)\;/\;1/b\leq r\leq1/a\}$ is a closed annulus. 
So we consider the problem
\begin{equation}\label{eq:min-g}
\min \left\{ j(u):=J(\Om_u),\quad u\in W^{1,\infty}(\T),\quad u''+u\geq 0,\quad
a\leq u \leq b\right\},
\end{equation}
where $j:W^{1,\infty}(\T)\to\R$, $0<a<b$ are given.
We are interested in sufficient conditions on $j$ (less restrictive as possible) such that the problem
(\ref{eq:min-g}) has for solution a polygon. We also look at the same question for the following problem with the volume constraint $|\Om_u| = m_0$ where $m_0$ is given, namely
\begin{equation}\label{eq:min-g+m}
\min \left\{ j(u),\quad u\in W^{1,\infty}(\mathbb T),\quad u''+u\geq 0,\quad
m(u):=\frac{1}{2}\int_\T \frac{d\theta}{u^2} = m_0
\right\},
\end{equation}
with $m_0>0$. Note that $m(u)$ is the measure of the domain inside the curve $\{(1/u(\theta),\theta),\, \theta\in\T\}$.
\begin{theorem}\label{th:concave}
Let $G:(\theta,u,p)\in\T\times\R\times\R\mapsto G(\theta,u,p)\in\R$ be of class $\C^2$ and
set $j(u)=\int_\T G(\theta,u,u')$. Let $u_0$ be a solution of \eqref{eq:min-g} or \eqref{eq:min-g+m} and assume that $G$ is strongly concave in the third variable at $u_0$, that is to say
\begin{equation}\label{eq:concave}
G_{pp}(\theta,u_0,u_0')<0,\quad \forall\theta\in\mathbb T.
\end{equation}
 \begin{itemize}
       \item If $u_0$ is a solution of \eqref{eq:min-g}, then $S_{u_0}\cap I$ is finite, for any $I=(\gamma_1,\gamma_2)\subset\{\theta\in\R,a<u_0(\theta)<b\}$, and in particular $\Om_{u_0}$ is locally polygonal inside the annulus $A(a,b)$,
       \item If $u_0>0$ is a solution of \eqref{eq:min-g+m}, then $S_{u_0}\cap\T$ is finite, and so $\Om_{u_0}$ is a polygon.
 \end{itemize}
Here $S_{u_0}$ denotes the support of the measure $u_0''+u_0$.
\end{theorem}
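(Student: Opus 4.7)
\textbf{Proof plan for Theorem \ref{th:concave}.} The strategy is to argue by contradiction using the second-order optimality conditions that will be established in Section \ref{sect:optimality-conditions}. Suppose $S_{u_0}$ has an accumulation point $\theta^*$ in an interval $I\subset\{a<u_0<b\}$ (for the first bullet) or in $\T$ (for the second bullet). By choosing nested neighborhoods of $\theta^*$, I can extract three points $\theta_1<\theta_2<\theta_3$ of $S_{u_0}$ with $\ell:=\theta_3-\theta_1$ arbitrarily small. I will construct a perturbation $v$ supported on $[\theta_1,\theta_3]$ such that $v''+v$ is a signed measure supported on $\{\theta_1,\theta_2,\theta_3\}\subset S_{u_0}$, show that $v$ belongs to the critical cone provided by the optimality conditions, and then show $j''(u_0)(v,v)<0$, contradicting the second-order necessary condition.

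The explicit test function is the tent-like function
\[
v(\theta)=\begin{cases}\sin(\theta-\theta_1)&\theta\in[\theta_1,\theta_2],\\ c\,\sin(\theta_3-\theta)&\theta\in[\theta_2,\theta_3],\\ 0&\textrm{otherwise},\end{cases}
\qquad c=\frac{\sin(\theta_2-\theta_1)}{\sin(\theta_3-\theta_2)}.
\]
By construction $v''+v\equiv 0$ on each subinterval and the derivative jumps at $\theta_1,\theta_2,\theta_3$ produce Dirac masses supported inside $S_{u_0}$; hence both $u_0\pm tv$ remain convex for small $|t|$ (the negative Dirac contributions are absorbed by the positive mass that $u_0''+u_0$ carries on its support, or, when $S_{u_0}$ accumulates through non-atomic mass, by picking $v$ with density dominated by $u_0''+u_0$). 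Since $v$ is supported in the open set $\{a<u_0<b\}$, the pointwise constraints in \eqref{eq:min-g} are also preserved for small $t$. For the volume-constrained problem \eqref{eq:min-g+m} I add to $v$ a small correction $\alpha w$, where $w$ is a fixed admissible function with $\int w/u_0^3\neq 0$ and disjoint support from $v$, choosing $\alpha$ so that $\int (v+\alpha w)/u_0^3\,d\theta=0$; since $v=O(\ell)$ on an interval of length $\ell$, $\alpha=O(\ell^2)$, so the correction is negligible relative to the main term computed below.

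Next I evaluate the second variation
\[
j''(u_0)(v,v)=\int_\T\bigl[G_{uu}\,v^2+2G_{up}\,vv'+G_{pp}\,(v')^2\bigr]\,d\theta.
\]
On $[\theta_1,\theta_3]$ one has $\|v\|_\infty=O(\ell)$ while $\|v'\|_\infty=O(1)$, so the first two terms contribute $O(\ell^3)$ and $O(\ell^2)$ respectively. A direct computation gives $\int_{\theta_1}^{\theta_3}(v')^2\,d\theta\gtrsim\ell$ (with explicit constants depending on the ratio $(\theta_2-\theta_1)/(\theta_3-\theta_2)$, which I can keep bounded away from $0$ and $\infty$ by choosing the three support points judiciously). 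By continuity and the strong concavity assumption \eqref{eq:concave}, there is $c_0>0$ with $G_{pp}(\theta,u_0,u_0')\leq -c_0$ uniformly near $\theta^*$, so the $G_{pp}$-term is at most $-c_0\,\ell$ plus lower-order contributions. Thus $j''(u_0)(v,v)\leq -c_0\,\ell+O(\ell^2)<0$ for $\ell$ small, the same bound surviving after the $\alpha w$ correction in the volume-constrained case (possibly replacing $j''$ by $j''-\lambda m''$ with the Lagrange multiplier $\lambda$ from Section \ref{sect:optimality-conditions}).

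\textbf{Main obstacle.} The delicate point is not the quadratic estimate, which is essentially forced by $G_{pp}<0$, but verifying that $v$ lies in the \emph{critical cone} of the second-order conditions rather than merely in the first-order tangent cone: I need both $u_0+tv$ admissible for $t$ of either sign (or at least that the appropriate second-order inequality applies to $v$). This requires choosing $\theta_1,\theta_2,\theta_3$ inside $S_{u_0}$ in a way compatible with the local structure of the measure $u_0''+u_0$ (atomic, singular continuous, or absolutely continuous parts), and, for problem \eqref{eq:min-g+m}, ensuring that the volume-preserving correction $\alpha w$ does not destroy admissibility of the resulting direction. Once the exact form of the second-order necessary condition is fixed in Section \ref{sect:optimality-conditions}, these verifications amount to a careful but standard bookkeeping.
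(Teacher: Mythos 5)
Your overall strategy coincides with the paper's: argue by contradiction at an accumulation point of $S_{u_0}$, build a test direction supported on a short interval whose second distributional derivative (plus itself) lives on $S_{u_0}$, and let the uniformly negative $G_{pp}$-term beat the $G_{uu}$- and $G_{up}$-terms via a Poincar\'e-type scaling as the interval shrinks. But there is a genuine gap at the heart of the construction, and it is exactly the point you defer in your parenthetical remark. The tent function $v$ has $v''+v=j_1\delta_{\theta_1}+j_2\delta_{\theta_2}+j_3\delta_{\theta_3}$ with $j_2<0$, so $v''+v\geq\lambda(u_0''+u_0)$ holds for \emph{some} $\lambda$ (hence $v$ is eligible for the second-order condition of Proposition \ref{prop:euler}) only if $u_0''+u_0$ has an \emph{atom} at $\theta_2$ (and at $\theta_1,\theta_3$ if you insist on two-sided admissibility). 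Points of the support of a measure need not be atoms: near the accumulation point $u_0''+u_0$ could be singular continuous or absolutely continuous, and then no $\lambda$ works and your $v$ is not in the critical cone. This is not "standard bookkeeping" to be fixed later; it is the main difficulty the paper identifies explicitly, and its resolution is the one real idea of the proof: take $v_{n,i}$ solving $v_{n,i}''+v_{n,i}=\chi_{(\eps_n^i,\eps_n^{i+1})}(u_0''+u_0)$ on two disjoint subintervals each meeting $S_{u_0}$, so that the right-hand side is dominated by $u_0''+u_0$ \emph{whatever its Lebesgue decomposition}, and then use the two free coefficients $\lambda_{n,1},\lambda_{n,3}$ to cancel the derivative jumps of $v_n$ at the endpoints of its support (which would otherwise create stray Dirac masses outside $S_{u_0}$). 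Your sketch contains neither the dominated right-hand side nor the endpoint-jump cancellation, and your quantitative estimates ($\|v\|_\infty=O(\ell)$, $\|v'\|_\infty=O(1)$, $\int(v')^2\gtrsim\ell$) are specific to the tent function and do not transfer. (They are also not quite right even for the tent function: you cannot in general choose three points of $S_{u_0}$ with gap ratio bounded away from $0$ and $\infty$, so the $G_{pp}$-term need not be of order $\ell$; the correct comparison is the relative one, $\int v^2\leq(\ell/\pi)^2\int(v')^2$ by Poincar\'e on $H^1_0$ of the support, which makes the lower-order terms $o(1)\cdot\int(v')^2$ and is how the paper concludes.)

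Two further points. First, for the volume constraint your corrector $w$ with support disjoint from $v$ is problematic: a nontrivial $w\in H^1_0(J)$ with $J\cap S_{u_0}=\emptyset$ and $w''+w\geq\lambda(u_0''+u_0)=0$ on $J$ cannot exist (maximum principle on an interval of length $<\pi$ forces a sign contradiction), so $w$ must either be global (e.g.\ $w=u_0$, which works but kills the disjoint-support argument) or be built on $S_{u_0}$, which returns you to the atomicity problem; the paper instead uses four subintervals and imposes the three linear conditions $v_n'(0^+)=v_n'(\eps_n^-)=m'(u_0)(v_n)=0$ directly on the combination. Second, for the annulus problem the accumulation point of $S_{u_0}\cap I$ may sit at an endpoint of $I$ where $u_0=a$ or $u_0=b$; the paper treats these as separate cases (b) and (c), using the already-established interior case to know the measure is atomic on one side before placing the test function between consecutive atoms. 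Your choice of supporting $v$ on a compact subinterval of $I$ would likely absorb these cases, but only once the non-atomic interior case is actually settled.
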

See sections \ref{ssect:concave} and \ref{ssect:concave+m} for a proof.  
\begin{remark}\rm
We choose to analyze a volume constraint in \eqref{eq:min-g+m} because this one is classical, and also to show that our approach can be adapted to nonlinear constraints. With a few adjustments, this approach can be adapted to some other constraints, regular enough in terms of $u$, see Proposition \ref{prop:euler+m} and Section \ref{ssect:concave+m}.
\end{remark}
\begin{remark}\rm
 The result is still true if $u_0$ is only a local minimum of \eqref{eq:min-g} or \eqref{eq:min-g+m}, since the proof only use the optimality conditions stated in Section \ref{sect:optimality-conditions}.
\end{remark}

\begin{remark}\rm\label{rk:non-polygon}
 With the only assumptions of Theorem \ref{th:concave}, it is not true that $\Om_{u_0}$ is a polygon if $u_0$ is solution of \eqref{eq:min-g}. Indeed, a solution can saturate the constraint $u\geq a$ or $u\leq b$, and in these cases,  $\partial\Om_{u_0}$ contains an arc of circle. In some particular cases, a solution can also have an infinite number of corners. We refer to Section \ref{sect:sharpness} for explicit examples.
\end{remark}

\noindent
In the following results, we want to go deeper in the analysis, in order to find conditions on $G$ for the solution of \eqref{eq:min-g} to be a polygon. As mentioned in Remark \ref{rk:non-polygon}, we need to avoid that $\partial\Om_{u_0}$ touches the boundary of $A(a,b)$ in an arc of circle, and also an accumulation of corners of $\partial\Om_{u_0}$ in a neighborhood of $\partial A(a,b)$. We treat two kinds of technical assumptions:

\begin{theorem}\label{th:polygon-g}
Let $j(u)=\int_\T G(u,u')$ with $G:(0,\infty)\times \R\to\R$, and let $u_0$ be a solution of (\ref{eq:min-g}). Assume that\\
(i) $G$ is a $\mathcal{C}^2$ function and 
$G_{pp}< 0$ on $\{(u_0(\theta),u_0'(\theta)),\ \theta\in\mathbb T\}$,\\
(ii) The function $p\mapsto G(a,p)$ is even and one of the followings holds \\
\begin{tabular}{ll}
    (ii.1)& $G_u(a,0)< 0$ \textbf{or}\\
    (ii.2)& $G_u(a,0)= 0$ and $G_u(u_0,u_0')u_0 + G_p(u_0,u_0')u_0'\leq0$,\\
\end{tabular}\\
(iii) The function $p\mapsto G(\cdot,p)$ is even and $G_u\geq0$ near $(b,0)$.\\
Then $S_{u_0}$ is finite, i.e. $\Om_{u_0}$ is a polygon.
\end{theorem}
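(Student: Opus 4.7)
The strategy is to build on Theorem \ref{th:concave}: hypothesis (i) is exactly \eqref{eq:concave} along $u_0$, so that theorem already yields that $S_{u_0}$ is locally finite on $\{a<u_0<b\}$, i.e.\ $\Omega_{u_0}$ is locally polygonal inside the open annulus $A(a,b)$. Writing $E_a:=\{u_0=a\}$ and $E_b:=\{u_0=b\}$, it therefore suffices to establish two facts: (A) neither $E_a$ nor $E_b$ contains an open interval -- equivalently $\partial\Omega_{u_0}$ contains no arc of circle; (B) corners of $\partial\Omega_{u_0}$ cannot accumulate at a point of $E_a\cup E_b$.

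\textbf{Step A (no circular arcs).} Assume $u_0\equiv b$ on an open interval $I\subset\T$ and pick $w\in C^\infty_c(I)$ with $w\geq 0$. For $t>0$ small, $u_0-tw\in[a,b]$ and $(u_0-tw)''+(u_0-tw)=b-t(w''+w)\geq 0$, so the perturbation is admissible. The evenness of $p\mapsto G(\cdot,p)$ from (iii) gives $G_p(b,0)=0$ and $G_{up}(b,0)=0$, so the first-order inequality reduces to $\int_I G_u(b,0)\,w\,d\theta\leq 0$; combined with $G_u\geq 0$ near $(b,0)$ this forces $G_u(b,0)=0$, and the second-order inequality becomes
\begin{equation*}
\int_I\bigl[G_{uu}(b,0)\,w^2+G_{pp}(b,0)\,(w')^2\bigr]\,d\theta\geq 0.
\end{equation*}
Taking $w_k(\theta)=\chi(\theta)\sin^2(k\theta)$ for a fixed $\chi\in C^\infty_c(I)$, $\chi\geq 0$, one has $\int w_k^2$ bounded while $\int(w_k')^2$ grows like $k^2$, so by (i) the left-hand side tends to $-\infty$, contradiction. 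The case $E_a$ is symmetric, using the perturbation $u_0+tw$ with $w\geq 0$; the first-order condition gives $G_u(a,0)\geq 0$, which directly contradicts (ii.1). Under (ii.2), $G_u(a,0)=0$ and the same oscillatory test violates the second-order inequality -- only evenness at $a$ is needed, because the cross term $\int G_{up}(a,0)\,ww'\,d\theta=\frac{1}{2}G_{up}(a,0)\int(w^2)'\,d\theta$ vanishes by compact support.

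\textbf{Step B (no accumulation of corners).} Suppose a sequence of corners $\theta_n\to\theta_0$ exists with $u_0(\theta_0)=a$ (the case $b$ is analogous). On each arc $(\theta_{n+1},\theta_n)$ we have $u_0''+u_0=0$, hence $u_0=A_n\cos(\theta-\phi_n)$ and $\partial\Omega_{u_0}$ is a line segment; continuity of $u_0$ together with $u_0(\theta_n)\to a$ forces $A_n\to a$ and $\phi_n\to\theta_0$, so $u_0'(\theta_n^\pm)\to 0$. Directions of the form $v=\alpha\cos\theta+\beta\sin\theta$ satisfy $v''+v=0$ and are therefore admissible in both signs on each such arc; the first-order optimality yields equalities which, after integration by parts, involve the scaling quantity $G_u(u_0,u_0')u_0+G_p(u_0,u_0')u_0'$ appearing in (ii.2). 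Passing to the limit $n\to\infty$ and using the sign condition $G_u u_0+G_p u_0'\leq 0$ from (ii.2) together with $G_u(a,0)=0$ forces the amplitudes to be compatible with $u_0\equiv a$ near $\theta_0$, contradicting the existence of genuine corners $\theta_n$. In the $E_b$ case, the analogous role is played by $G_u\geq 0$ near $(b,0)$ from (iii).

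The main obstacle is clearly \textbf{Step B}. Step A is a direct second-order argument driven by the strict concavity (i). Ruling out an accumulating zigzag of segments near a boundary point, in contrast, requires a careful passage to the limit in the Euler--Lagrange system on arcs of vanishing aperture; this is precisely why the auxiliary pointwise inequalities in (ii.2) and in (iii) appear in the statement, and the examples in Section \ref{sect:sharpness} should confirm that they cannot be significantly weakened.
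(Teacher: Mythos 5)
Your reduction to Theorem \ref{th:concave} plus a boundary analysis is the right skeleton, and your Step A (ruling out arcs of circle in $\{u_0=a\}$ or $\{u_0=b\}$ by one-sided first-order inequalities followed by an oscillatory second-order test) is a correct, self-contained argument. But the proof has a genuine gap exactly where you locate "the main obstacle": Step B is not an argument, it is a description of a hoped-for conclusion. The sentence ``the first-order optimality yields equalities which \dots\ forces the amplitudes to be compatible with $u_0\equiv a$ near $\theta_0$'' does not identify which admissible perturbation is used, why it is admissible, or what quantitative contradiction is reached. Note also that $v=\alpha\cos\theta+\beta\sin\theta$ truncated to one arc creates Dirac masses of $v''+v$ at the arc endpoints, so its admissibility for Proposition \ref{prop:euler} is not automatic; and your Step B silently assumes consecutive corners are joined by segments with $u_0''+u_0=0$, which does not cover the case (handled in the paper as case (a.1) of Proposition \ref{prop:boundary-g}) of support points of $u_0''+u_0$ lying in $\{a<u_0<b\}$ accumulating at a point of $\{u_0=a\}$.

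For comparison, the paper's treatment of the accumulation at $\{u_0=a\}$ (case (a.2)) builds the explicit direction $v_n$ with $v_n''+v_n=-(u_0''+u_0)$ on $(0,\eps_n)$ and $v_n=0$ outside (straightening the boundary into one segment), proves $v_n>0$, $v_n\to 0$ in $W^{1,\infty}$, uses the evenness of $p\mapsto G(a,p)$ to make the boundary terms $[G(u_0,u_0')]_{\partial\omega_i}$ vanish, and decomposes $v_n=\alpha_{n,i}u_0+\beta_{n,i}u_0'$ with $\alpha_{n,i}>0$ so that the first-order inequality is controlled precisely by the sign of $G_uu_0+G_pu_0'$; under (ii.2) this must be supplemented by a second-order Taylor expansion of $j(u_0+v_n)-j(u_0)$ exploiting $G_{pp}<0$. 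The accumulation at $\{u_0=b\}$ is handled by a completely different device: an explicit competitor $u_n$ made of two lines tangent to the circle $r=1/b$, the pointwise comparisons $u_n<u_0$ and $|u_n'|>|u_0'|$, and the sign $G_p(\cdot,p)<0$ for $p>0$ near $(b,0)$ deduced from evenness and concavity, combined with $G_u\geq 0$. None of these constructions, nor substitutes for them, appear in your proposal, so the theorem is not proved as written; what you have established is Theorem \ref{th:concave} plus the non-existence of circular arcs, which is strictly weaker than finiteness of $S_{u_0}$.
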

The proof of this theorem follows from Theorem \ref{th:concave} and Proposition \ref{prop:boundary-g}.
\begin{example}\rm
We can give the following geometric example :
$$J(\Om)=\lambda|\Om|-P(\Om),$$
where $|\cdot|$ denotes the area, $P(\cdot)$ denotes the perimeter, and $\lambda\in [0,+\infty]$. The minimization of $J$ within convex sets whose boundary is inside the annulus $A(a,b)$ is in general non trivial.

When $\lambda=0$, the solution is the disk of radius $1/a$ (see \cite{BG97Sha} for a monotony property of perimeter with convex sets). When $\lambda=+\infty$, the solution is the disk of radius $1/b$.

We can easily check (see section \ref{sect:sharpness} for more detailed examples) that $j(u)=J(\Om_u)$ satisfies hypothesis of Theorem \ref{th:concave}, so any solution is locally polygonal inside $A(a,b)$. And from Theorem \ref{th:polygon-g}, if $\lambda\in(a,b)$ (in order to get conditions $(ii)$ and $(iii)$), any solution is a polygon.
\end{example}

We can prove the same result as in Theorem \ref{th:polygon-g} with a weaker condition than the
uniform condition given in (i), namely when $G_{pp}(a,p)=0$, like in \cite{LP01New}.
\begin{theorem}\label{th:polygon-g-bis}
Let $j(u)=\int_\T G(u,u')$ with $G:(0,\infty)\times \R\to\R$, $C(b)=2\pi b$ (see Lemma \ref{lem:borne-u'}) and let $u_0$ be a solution of (\ref{eq:min-g}).
 We assume that\\
(i) $G$ is a $\mathcal{C}^3$ function, 
$G_{pp}=0$ in $\{a\}\times[-C(b),C(b)]$, and $G_{pp}<0$ in $(a,b]\times[-C(b),C(b)]$,\\
(ii) $p\to G(a,p)$ is even, $G_u(a,p)<0$ for all $p\in[-C(b),C(b)]$ and $pG_{up}(a,p)=z(p)G_{upp}(a,p)$ for $p\in(0,C(b)]$, with a certain function $z\geq 0$,\\
(iii) $p\to G(\cdot,p)$ is even and $G_u\geq0$ near $(b,0)$.\\
Then $S_{u_0}$ is finite, i.e. $\Om_{u_0}$ represents a polygon.
\end{theorem}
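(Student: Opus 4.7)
The strategy is to combine Theorem~\ref{th:concave} with a boundary analysis similar to Theorem~\ref{th:polygon-g}, handling separately the regions $\{u_0>a\}$, $\{u_0=a\}$, and $\{u_0=b\}$. On the open set $\{\theta\in\T:u_0(\theta)>a\}$, hypothesis (i) gives strict concavity $G_{pp}(u_0(\theta),u_0'(\theta))<0$, so the argument of Theorem~\ref{th:concave} applies to every compact subinterval of $\{a<u_0<b\}$ and yields finiteness of $S_{u_0}$ on such subintervals. The contact set $\{u_0=b\}$ is treated exactly as in Theorem~\ref{th:polygon-g}, since hypothesis (iii) is identical and the corresponding step relies only on Proposition~\ref{prop:boundary-g}. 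What remains, and is genuinely new, is to rule out pathological accumulation of $S_{u_0}$ near the inner contact set $\{u_0=a\}$, where $G_{pp}$ degenerates and Theorem~\ref{th:concave} no longer applies directly.

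Let $J=(\alpha,\beta)$ be a maximal open interval on which $u_0>a$. The preceding step already yields finitely many corners on every compact subinterval of $J$, so $\partial\Om_{u_0}$ restricted to $J$ is made of finitely many smooth arcs separated by straight segments. On each such smooth arc the unconstrained Euler--Lagrange equation $(d/d\theta)\,G_p(u_0,u_0')=G_u(u_0,u_0')$ holds, and since $G$ is autonomous the quantity $G(u_0,u_0')-u_0'G_p(u_0,u_0')$ is conserved along the arc. At the endpoints $\alpha,\beta$ continuity forces $u_0\to a$ and, since $u_0-a$ attains a minimum there, $u_0'\to 0$; the evenness of $G(a,\cdot)$ yields $G_p(a,0)=0$, which is the correct transversality condition for smooth tangential contact with the circle $r=1/a$. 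The question thus reduces to excluding a sequence of bumps $J_n=(\alpha_n,\beta_n)$ accumulating at a point $\theta_\ast$ with $u_0(\theta_\ast)=a$.

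The main obstacle, and where hypothesis (ii) plays its decisive role, is precisely this exclusion. The sign condition $G_u(a,p)<0$ makes any excursion $u_0>a$ strictly costly, so a bump can only be stationary if compensated by a definite amount of ``kinetic'' contribution from $u_0'$; the algebraic identity $pG_{up}(a,p)=z(p)G_{upp}(a,p)$ with $z\geq 0$, used together with the evenness of $G(a,\cdot)$, is designed so that a Taylor expansion of the conserved first integral along a half-bump, combined with the change of variables $p=u_0'(\theta)$, produces an inequality of definite sign that cannot hold for arbitrarily small bumps. This is the heart of the proof and parallels the strategy of Lachand-Robert--Peletier \cite{LP01New}, the function $z$ playing exactly the role that $p^2$ plays in Newton's problem. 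Once the accumulation of bumps is excluded, $S_{u_0}$ has no accumulation point in $\T$, hence is finite by compactness, and $\Om_{u_0}$ is a polygon.
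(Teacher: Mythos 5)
Your overall architecture is the right one and matches the paper's: away from $\{u_0=a\}$ hypothesis (i) restores strict concavity, so the argument of Theorem \ref{th:concave} applies on compact subsets of $\{u_0>a\}$, and the set $\{u_0=b\}$ is handled exactly as in Theorem \ref{th:polygon-g}. But the genuinely new part --- ruling out accumulation of $S_{u_0}$ at a point where $u_0=a$, where $G_{pp}$ degenerates --- is precisely the part you do not prove: your last paragraph only describes what hypothesis (ii) ``is designed'' to do, without carrying out any computation. The paper's argument there is a two-stage analysis with explicit test functions $v_n$ (piecewise products of sines supported on $(\theta_{n+1},\theta_{n-1})$ around three consecutive corners): the first-order condition combined with a divergent-series lemma from \cite{LP01New} forces the identity $G_u(a,u_0'(0^+))=G_{up}(a,u_0'(0^+))\,u_0'(0^+)$, which together with (ii) yields $u_0'(0^+)>0$ and $G_{upp}(a,u_0'(0^+))<0$; then the second-order condition, with the refined expansion $G_{pp}(u_0,u_0')=G_{upp}(a,u_0')\,u_0'(0^+)\,\theta\,(1+o(1))$ replacing the now-useless uniform bound $K_{pp}$, produces the contradiction. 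None of this is recoverable from your sketch.

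Moreover, two of the assertions you do make are incorrect and would derail the route you propose. First, on the arcs between corners the pointwise Euler--Lagrange equation $(d/d\theta)G_p(u_0,u_0')=G_u(u_0,u_0')$ does \emph{not} hold: there the active condition is $u_0''+u_0=0$ (straight segments of $\partial\Om_{u_0}$), and stationarity only gives $G_u-(d/d\theta)G_p=\zeta_0+\zeta_0''+\dots$ with a multiplier $\zeta_0$ that vanishes on $S_{u_0}$ but not off it; hence your ``conserved first integral'' $G-u_0'G_p$ is not conserved along those arcs. Second, at an endpoint $\alpha$ of a maximal interval of $\{u_0>a\}$ it is false that $u_0'\to 0$: only the one-sided inequalities $u_0'(\alpha^-)\leq 0\leq u_0'(\alpha^+)$ hold, and in fact the paper's contradictions hinge on proving $u_0'(0^+)>0$ at the putative accumulation point (and, in case (a.1) of Proposition \ref{prop:boundary-g-bis}, playing this against the value $u_0'(0^+)=0$ forced when $\{u_0=a\}$ accumulates from the right). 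Finally, your reduction to ``a sequence of bumps accumulating at $\theta_\ast$'' omits two configurations that the paper treats separately: corners accumulating at the endpoint of a \emph{single} bump (case (b) of Proposition \ref{prop:concave-bis}), and accumulation of the contact set $\{u_0=a\}$ with only straight segments in between (case (a.2) of Proposition \ref{prop:boundary-g}, which uses $G_u(a,0)<0$, the parity of $G(a,\cdot)$ and a completely different test function). As it stands the proposal is an outline of intentions rather than a proof.
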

The proof of this theorem follows from Propositions \ref{prop:concave-bis} and \ref{prop:boundary-g-bis}.

\begin{remark}\rm
The hypotheses in Theorem \ref{th:polygon-g} and \ref{th:polygon-g-bis} are quite general.
In Section \ref{sect:sharpness} we give certain examples showing that if one of these hypotheses is not satisfied, then the solutions of (\ref{eq:min-g}), in general,
are not polygons.
\end{remark}
\begin{remark}\rm\label{rk:MC}
The condition (ii.2) in Theorem \ref{th:polygon-g} (less natural than (ii.1)) has been motivated by the problem in \cite{C05Une}, where $G(u,p)=h(p/u)$ with $h(\cdot)$ a $\mathcal{C}^2$, strictly concave, and  even function. Such a $G(u,p)$ satisfies the hypothesis of Theorem \ref{th:polygon-g}. Indeed,\\
\begin{tabular}{lp{14cm}}
(a) & $G_{pp}(u,p)=h''(p/u)u^{-2}$, so  $G_{pp}(u,p)<0$ and (i) is satisfied,\\
(b) & $G_u(u,p)u+G_p(u,p)p=0$ and $G_u(\cdot,0)=0$, so (ii.2) is satisfied,\\
(c) & $G_u(u,p)=-h'(p/u)\frac{p}{u^2}\geq 0$ so (iii) is satisfied.\\
\end{tabular}\\
Therefore the solution is a polygon. In \cite{C05Une}, several more precise statements about the geometric nature of solutions are proven (in this particular case).
\end{remark}
\begin{remark}\rm\label{rk:LR}
Similarly, Theorem \ref{th:polygon-g-bis} gives a generalization of the problem studied in \cite{LP01New}. Indeed, in this problem,
they have $G(u,p)=h_1(u)-p^2 h_2(u)$ with $h_1,h_2$ two $\mathcal{C}^2$ functions satisfying $h_1'(a)<0$, $h_1'(b)>0$, $h_2(a)=0$, and
$\forall\; t>a,\; h_2(t)>0$ ($G$ is not $\mathcal{C}^3$ in this case, but in fact we only need the existence of $G_{upp}$, which is clear here).\\
The function $G(u,p)$ satisfies the hypothesis
of Theorem \ref{th:polygon-g-bis} as $p\to G(u,p)$ is even and\\
\begin{tabular}{lp{14cm}}
(a) & $G_{pp}(u,p)=-2h_2(u)$, so (i) is satisfied.\\
(b) & $G_u(a,p)=h_1'(a)<0$ and $G_{up}(a,p)=-2p h_2'(a)$, $G_{upp}(a,p)=-2h_2'(u)$, so $G_{up}(u,p)=pG_{upp}(u,p)$, and therefore (ii) is satisfied.\\
(c) & $G_u(u,p)=h_1'(u)-p^2h_2'(u)$ so $G_u(b,0)=h_1'(b)>0$. 
\end{tabular}\\
This last assumption is not specified in \cite{LP01New}, but according to us, we need this one, see Section \ref{ssect:sharpness}. In fact, it seems that the case of an accumulation of corners in the interior boundary $\{u_0=b\}$ is not considered in \cite{LP01New} (see Proposition \ref{prop:boundary-g}, case (b)).\\
So the solution is a polygon. In \cite{LP01New}, it is also proven that this polygon is regular in this particular case.
\end{remark}

\begin{remark}\rm\label{rk:exist}
Les us make some comments on the question of existence. For the problem (\ref{eq:min-g}), there always exists a solution, if for example $j$ is continuous in $H^1(\T)$ (see below for a definition). Indeed, the minimization set $\{u\in W^{1,\infty}(\T)\;/\;u''+u\geq0, a\leq u\leq b\}$ is strongly compact in $H^1(\T)$.\\
About the problem (\ref{eq:min-g+m}) with a measure constraint, the question is more specific. For example, if one looks at the problem of maximization of the perimeter (for which the concavity assumptions is satisfied), with convexity and measure constraints, we are in a case of non-existence (the sequence of rectangles $\Omega_n=(-n/2,n/2)\times(-m_0/2n,m_0/2n)$ satisfies the constraints, whereas the perimeter is going to $+\infty$). However, existence may be proved for many further functionals. In Theorem \ref{th:concave}, we avoid this issue by asking the solution to be positive (and so to represent a convex bounded set of dimension 2).
\end{remark}

\section{First and second order optimality conditions}\label{sect:optimality-conditions}

As we noticed in Remark \ref{rk:exist}, the minimization set is compact. So there are
very few directions to write optimality. However, we are able in this section to
write general optimality conditions for our problem.\\

Let us first introduce an abstract setting (see \cite{IT79The}, \cite{MZ79Fir}). Let $U, Y$ be two real Banach spaces, let $K$ be a nonempty closed convex cone in $Y$ and let $f:U\to \R, \;g:U\to Y$. We consider the minimization problem
\begin{eqnarray}\label{abstract-pb}
\min\{ f(u),\ u\in U,\;g(u)\in K\}.
\end{eqnarray}
We denote by $U'$ (resp. $Y'$) the Banach space of continuous linear maps from $U$ (resp. $Y$) into $\R$ (dual spaces of $U,Y$), and we introduce
$$Y'_+=\{l\in Y';\;\forall \,k\in K,\;  l(k)\geq 0\;\}.$$
The following result is a particular case of Theorem 3.2 and 3.3 stated in \cite{MZ79Fir} which will be sufficient for our purpose.
\begin{prop} \label{main}
Let $u_0\in U$ be a solution of the minimization problem (\ref{abstract-pb}). Assume $f$ and $g$ are twice (Fr\'echet-)differentiable at $u_0$ and that $g'(u_0)(U)=Y$. Then,
\begin{enumerate}
\item[(i)] 
there exists $l\in Y'_+$ such that $f'(u_0)=l\circ g'(u_0)$ and $l(g(u_0))=0$,
\item[(ii)] if $F(u):=f(u)-l(g(u))$, then $F''(u_0)(v,v)\geq 0$ for all $v\in T_{u_0}$ where
$$T_{u_0}=\big\{v\in U;\;f'(u_0)(v)=0,\; g'(u_0)(v)\in K_{g(u_0)}=\{K+\lambda g(u_0);\lambda\in\R\}\big\}.$$
\end{enumerate}
\end{prop}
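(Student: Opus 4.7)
The plan is to follow the classical Lagrange multiplier framework in Banach spaces (Lyusternik, Maurer--Zowe). The two central tools are a linearization/correction technique, enabled by the surjectivity hypothesis $g'(u_0)(U)=Y$, and a Hahn--Banach separation in $\R\times Y$ producing the multiplier $l$.

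For part (i), I would first derive the linearized necessary condition: no $v\in U$ satisfies simultaneously $f'(u_0)(v)<0$ and $g'(u_0)(v)\in K_{g(u_0)}$. The argument is by contradiction: given such a $v$, apply the Graves/Lyusternik open mapping theorem to the surjective operator $g'(u_0)$ to construct a feasibility correction $\xi(t)=o(t)$ such that $u(t):=u_0+tv+\xi(t)$ satisfies $g(u(t))\in K$ for small $t\geq 0$; the first-order expansion $f(u(t))=f(u_0)+tf'(u_0)(v)+o(t)<f(u_0)$ then contradicts optimality. Next, a Hahn--Banach argument separates the convex cone
\[
\{(f'(u_0)(v),\,g'(u_0)(v)-z):\;v\in U,\,z\in K_{g(u_0)}\}
\]
in $\R\times Y$ from $(-\infty,0)\times\{0\}$, yielding a nonzero continuous linear functional $(\mu,l)\in\R\times Y'$ with $\mu\geq 0$, $l\in Y'_+$, $\mu f'(u_0)=l\circ g'(u_0)$, and complementary slackness $l(g(u_0))=0$. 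Surjectivity of $g'(u_0)$ forbids the abnormal case $\mu=0$ (which would force $l=0$), so after rescaling $\mu=1$, giving (i).

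For (ii), take $v\in T_{u_0}$ and decompose $g'(u_0)(v)=k+\lambda g(u_0)$ with $k\in K$ and $\lambda\in\R$. Using surjectivity of $g'(u_0)$, pick $w\in U$ with $g'(u_0)(w)=-g''(u_0)(v,v)$. A second-order Lyusternik/implicit-function correction then supplies a path
\[
u(t)=u_0+tv+\tfrac{t^2}{2}w+r(t),\qquad r(t)=o(t^2),
\]
with $g(u(t))=(1+t\lambda)g(u_0)+tk\in K$ for small $t\geq 0$ (a nonnegative combination of elements of $K$). Optimality gives $f(u(t))\geq f(u_0)$; Taylor-expanding and using $f'(u_0)(v)=0$ produces
\[
f''(u_0)(v,v)+f'(u_0)(w)\geq 0.
\]
Substituting $f'(u_0)(w)=l(g'(u_0)(w))=-l(g''(u_0)(v,v))$, which comes from (i), rewrites this inequality as $F''(u_0)(v,v)\geq 0$.

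The main obstacle is the construction of the feasibility corrections at first and second order; once these are in hand, (i) reduces to Hahn--Banach bookkeeping and (ii) to a Taylor expansion. Both corrections depend crucially on the surjectivity of $g'(u_0)$, which via the open mapping theorem lets one realize prescribed first- or second-order perturbations of $g$ by controlled perturbations of $u$. Without this regularity hypothesis, one would fall into the abnormal multiplier case, and admissible linearized tangent vectors could fail to lift to feasible curves, breaking both parts of the argument.
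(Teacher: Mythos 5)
The paper does not prove this proposition at all: it is stated as ``a particular case of Theorem 3.2 and 3.3 stated in [MZ79Fir]'' (Maurer--Zowe), and the authors simply cite that reference. So there is no in-paper proof to compare against; what you have written is, in outline, precisely the standard argument behind that citation, and it is essentially correct. Your part (i) (linearized necessary condition via a feasibility correction, then Hahn--Banach separation, with surjectivity of $g'(u_0)$ excluding the abnormal multiplier $\mu=0$) and your part (ii) (second-order correction $u(t)=u_0+tv+\tfrac{t^2}{2}w+r(t)$ with $g'(u_0)(w)=-g''(u_0)(v,v)$, then Taylor expansion and substitution of $f'(u_0)(w)=-l(g''(u_0)(v,v))$) are the right mechanisms, and the algebra in (ii) correctly uses that $(1+t\lambda)g(u_0)+tk$ is a nonnegative combination of elements of the cone $K$ for small $t$.

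Two points in your sketch are stated more strongly than they can be proved and deserve flagging. First, the feasibility corrections do not follow from the classical open mapping theorem for the linear operator $g'(u_0)$ alone: since the constraint is $g(u)\in K$ rather than $g(u)=0$, one needs the generalized open mapping / metric regularity result for the set-valued map $u\mapsto g(u)-K$ (Robinson--Ursescu, or the Zowe--Kurcyusz regularity condition, which here is implied by the stronger hypothesis $g'(u_0)(U)=Y$); the same openness is what gives the separating set in your Hahn--Banach step a nonempty interior, without which the separation theorem does not apply in a general Banach space. Second, your assertion that the corrected path satisfies $g(u(t))=(1+t\lambda)g(u_0)+tk$ \emph{exactly} is an overstatement: the correction $r(t)=o(t^2)$ only yields $g(u(t))\in K$, with the $o(t^2)$ discrepancy absorbed by moving within $g^{-1}(K)$, and that weaker conclusion is all the Taylor expansion of $f$ requires. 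With those two repairs your argument is a faithful reconstruction of the Maurer--Zowe proof that the paper invokes.
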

\begin{remark}\rm\label{Tu0} When applying the second order optimality condition (ii), we have to check whether well-chosen $v\in U$ are in $T_{u_0}$. This may be done by using (i) and the information on the linear map $l$. We may use instead the following: assume $g(u_0+tv)\in K$ for $t>0$ small, or, more generally that
\begin{equation}\label{T_u}
 u_0+t v=v_t+t\eps(t)\textrm{ with }\lim_{t\to 0,t>0}\eps(t)=0\textrm{ and }\;g(v_t)\in K;
\end{equation}
then
\begin{equation}\label{Euler-Inequality}
 f'(u_0)(v)\geq 0\textrm{ and } g'(u_0)(v)\in K_{g(u_0)}.
\end{equation}
To see this, we write the two following lines:
$$0\leq t^{-1}[f(v_t)-f(u_0)]=f'(u_0)(v)+\eps_1(t)\;where\;\lim_{t\to 0,t>0}\eps_1(t)=0,$$
$$g'(u_0)(v)=t^{-1}[g(v_t)-g(u_0)]+\eps_2(t)\;where\;\lim_{t\to 0,t>0}\eps_2(t)=0,$$
and we let $t$ tend to zero.\\
If now, \eqref{T_u} is valid for all $t$ small ($t>0$ and $t<0$), then $v\in T_{u_0}$.
\hfill$\Box$
\end{remark}

For our purpose, we choose $U=H^1(\T)$ the Hilbert space of functions from $\R$ into $\R$ which are in $H^1_{loc}(\R)$ and $2\pi$-periodic, equipped with the scalar product
$$\forall u,v\in U,\;(u,v)_{U\times U}=\int_\T u\,v+u'v'.$$

Let $g_0:U\to U'$ be defined by
$$\forall u,v\in U,\; g_0(u)(v)=\int_\T u\,v-u'v'.$$
For $l\in U'$ we say $l\geq0$ in $U'$ if $l(v)\geq0$ for all $v\in U$. Note that,  if $g_0(u)\geq 0$ in $U'$ then $u+u''$, computed in the sense of distributions in $\R$ , is a $2\pi$-periodic nonnegative measure on $\R$, and we have
\begin{eqnarray}\label{u+u''} g_0(u)(v)=\int_\T u\,v-u'v'=\int_\T v\,d(u+u'').
\end{eqnarray}
Note also, for further purposes, that $g_0(U)$ is a closed subspace of $U'$ which may be described as the "orthogonal" of the kernel of $g_0$ (because $\overline{R(g_0)}=N(g_0^*)^{\perp}$, with $g_0^*$ the adjoint of $g_0$), namely
$$g_0(U)=\{z\in U';\;\forall v\in Ker\, g_0,\;z(v)=0\}=\{z\in U';\;z(\cos)=z(\sin)=0\},$$
(and $\cos,\sin$ denote the usual cosine and sine functions on $\R$).

Finally, if $l$ is a continuous linear map from $g_0(U)$ into $\R$ (that is $l\in g_0(U)'$), then, thanks to the Hilbert space structure, there exists $\zeta\in U$ such that
\begin{eqnarray}\label{zeta}
\forall z\in g_0(U),\;l(z)=\langle z,\zeta\rangle_{U'\times U},\; (\zeta,cos)_{U\times U}=(\zeta,sin)_{U\times U}=0.
\end{eqnarray}


\noindent{\bf {\large First problem:}}\\
Let $j:U\to \R$ be $\C^2$. We set $Y:=g_0(U)\times U\times U$ equipped with its canonical Hilbert space structure whose scalar product writes: $\forall y=(z,u_1,u_2), \widehat{y}=(\widehat{z},\widehat{u_1},\widehat{u_2})\in Y$,
$$\langle y,\widehat{y}\rangle_{Y\times Y}:=\langle z,\widehat{z}\rangle_{U'\times U'}+ (u_1,\widehat{u_1})_{U\times U}+(u_2,\widehat{u_2})_{U\times U}.$$
And we define $g:U\to Y$ and $K\subset Y$ by 
$$g(u)=(g_0(u),u-a,b-u),\;K=\{(z,u_1,u_2)\in Y;\;z\geq 0\;in\;U',\,u_1,u_2\geq 0\;in\;U\}.$$

We look at the minimization problem
(see Lemma \ref{lem:borne-u'} and Remarks \ref{rem:spaces} and \ref{r:tildeG} for details about the choice of the two functional spaces $H^1(\T)$ and $W^{1,\infty}(\T)$):
\begin{eqnarray}\label{opt1}
\min\{ j(u),\; u\in U,\; g(u)\in K\}.
\end{eqnarray}

\begin{prop}\label{prop:euler}
If $u_0$ is a solution of (\ref{opt1}) where $j:H^1(\T)\to\R$ is $\mathcal{C}^2$, then there exist $\zeta_0 \in H^1(\T)$ nonnegative, $\mu_a, \mu_b \in \mathcal{M}^+(\T)$ (space of nonnegative Radon measure on
 $\T$) such that 
\begin{eqnarray}\label{zeta_0}  
\zeta_0=0\;on\;S_{u_0},\quad Supp(\mu_a)\subset\{u_0=a\},\quad Supp(\mu_b)\subset\{u_0=b\}\\
\mbox{ and }\forall\; v\in H^1(\T),\;
j'(u_0)v=\displaystyle{
\langle\zeta_0+\zeta_0'',v\rangle_{U'\times U}+\int_{\T}vd\mu_a-\int_{\T}vd\mu_b}.\label{eq:euler1}
\end{eqnarray}
Moreover, $\forall\; v\in H^1(\T) \mbox{ such that }\exists\lambda\in\R \mbox{ with }
\left\{\begin{array}{l}
v''+v\geq \lambda(u_0''+u_0)\\
v\geq\lambda(u_0-a),v\leq\lambda(u_0-b),\\
\langle\zeta_0+\zeta_0'',v\rangle_{U'\times U}+\int_\T vd(\mu_a-\mu_b)=0
\end{array}\right.$
\begin{equation}\label{eq:euler2}
\mbox{ we have }\;\;\;j''(u_0)(v,v)\geq 0.
\end{equation}
\end{prop}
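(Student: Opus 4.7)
The approach is to apply the abstract Proposition~\ref{main} to the concrete data $(U, Y, f=j, g, K)$ defined just above, and to translate the resulting Lagrange multiplier $l\in Y'_+$ into the triple $(\zeta_0,\mu_a,\mu_b)$. The first order equation and the support conditions will be read off from $j'(u_0)=l\circ g'(u_0)$ together with the complementary slackness $l(g(u_0))=0$, while the second order inequality will come from unpacking the tangent cone $T_{u_0}$ and using that $g$ is affine.

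In more detail, I would first dispatch the constraint qualification. Since $g'(u_0)(v)=(g_0(v),v,-v)$ has linked second and third components, the naive surjectivity $g'(u_0)(U)=Y$ fails and one must invoke the weaker Zowe--Kurcyusz-type qualification in \cite{MZ79Fir} (or split the pair of box constraints suitably). Granting this, Proposition~\ref{main} yields $l\in Y'_+$. The product structure of $Y$ lets me write $l(z,u_1,u_2)=l_0(z)+l_1(u_1)+l_2(u_2)$ with $l_0\in g_0(U)'$ nonnegative on $K_0:=\{z\in g_0(U):z\geq 0\text{ in }U'\}$, and $l_1,l_2\in U'$ nonnegative on nonnegative $H^1(\T)$ functions. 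The Hilbert representation \eqref{zeta} identifies $l_0$ with some $\zeta_0\in H^1(\T)$. For $l_1,l_2$, the one-dimensional Sobolev embedding $H^1(\T)\hookrightarrow C(\T)$ realizes these as positive continuous linear functionals on a dense subspace of $C(\T)$, and Riesz representation provides $\mu_a,\mu_b\in\mathcal M^+(\T)$.

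Next come signs and supports. For $\zeta_0\geq 0$ one uses $l_0(g_0(w))=\int\zeta_0\,d(w+w'')$ whenever $w+w''\geq 0$, and exploits both the positivity of $l_0$ on $K_0$ and the freedom to choose the representative of $l_0$ modulo $(g_0(U))^{\perp}=\mathrm{span}(\cos,\sin)$. Then $l(g(u_0))=0$ reads
$\int\zeta_0\,d(u_0''+u_0)+\int(u_0-a)\,d\mu_a+\int(b-u_0)\,d\mu_b=0$;
since all three integrands and all three measures are nonnegative, each term must vanish, yielding $\zeta_0=0$ on $S_{u_0}$, $\mathrm{Supp}(\mu_a)\subset\{u_0=a\}$ and $\mathrm{Supp}(\mu_b)\subset\{u_0=b\}$. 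The first order identity \eqref{eq:euler1} is then the decomposition $j'(u_0)(v)=l_0(g_0(v))+l_1(v)-l_2(v)$ rewritten with the convention $\langle\zeta_0+\zeta_0'',v\rangle_{U'\times U}:=\int(v\zeta_0-v'\zeta_0')\,d\theta$.

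For the second order statement I would unpack $T_{u_0}$. Membership $g'(u_0)(v)\in K_{g(u_0)}$ amounts to the existence of $\lambda\in\R$ with $(g_0(v)-\lambda g_0(u_0),\,v-\lambda(u_0-a),\,-v-\lambda(b-u_0))\in K$, which factor by factor reproduces $v''+v\geq \lambda(u_0''+u_0)$, $v\geq\lambda(u_0-a)$, $v\leq \lambda(u_0-b)$; the constraint $j'(u_0)(v)=0$ expands via \eqref{eq:euler1} to the third line of \eqref{eq:euler2}. Since $g$ is affine in $u$, $g''(u_0)=0$ and hence $F''(u_0)=j''(u_0)$, so Proposition~\ref{main}(ii) delivers \eqref{eq:euler2}. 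The step I expect to be most delicate is pinning down $\zeta_0\geq 0$: because not every nonnegative measure belongs to $g_0(U)$ (only those orthogonal to $\cos$ and $\sin$), the standard Schwartz identification of nonnegative distributions with nonnegative measures does not apply directly, and one must exploit the codimension-2 orthogonality to choose the right representative.
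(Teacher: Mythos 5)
Your setup, your unpacking of the tangent cone, and the second-order part all match the paper's proof, and your observation that $g'(u_0)(v)=(g_0(v),v,-v)$ is not literally onto $Y$ (so that one should invoke a Robinson/Zowe--Kurcyusz type qualification rather than the surjectivity the paper asserts) is a fair, even welcome, correction. The genuine gap is in the central step, which you flag as ``delicate'' but do not carry out, and whose two halves you have arranged in an order that cannot be closed. You propose to first secure $\zeta_0\geq 0$ by ``choosing the right representative'' modulo $\mathrm{span}(\cos,\sin)$, and then to read $\zeta_0=0$ on $S_{u_0}$ off the complementary slackness $\int\zeta_0\,d(u_0+u_0'')=0$. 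But positivity of $l_0$ on $K_0$ only tests $\zeta_0$ against nonnegative measures orthogonal to $\cos$ and $\sin$, and such measures cannot be localized in an arc of length $<\pi$; the paper explicitly remarks that this positivity does \emph{not} imply pointwise positivity of any representative in general. In the paper the logical order is reversed: one first proves the much stronger identity $Z_0\,(u_0+u_0'')\equiv 0$ for a suitable representative $Z_0=\zeta_0+\alpha\cos+\beta\sin$, and only then deduces $Z_0\geq 0$, using crucially that $Z_0$ already vanishes on $S_{u_0}$.

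Concretely, two arguments are missing. First, to get $Z_0\nu\equiv 0$ (with $\nu=u_0+u_0''$) one needs more than $\int\zeta_0\,d\nu=0$: from $g_0(\|\psi\|_{L^\infty}u_0\pm v)\geq 0$ one extracts $\langle\zeta_0,g_0(v)\rangle=0$ for every $v$ with $v+v''=\psi\nu$, and one must then solve a $2\times 2$ linear system --- invertible because its Gram determinant $CS-B^2$ is nonzero, $\nu$ having at least three points in its support --- both to fix $\alpha,\beta$ so that $\int\cos\,Z_0\,d\nu=\int\sin\,Z_0\,d\nu=0$ and to correct an arbitrary bounded Borel $\varphi$ into an admissible $\psi=\varphi+\alpha'\cos+\beta'\sin$; this yields $\int\varphi Z_0\,d\nu=0$ for all $\varphi$, hence $Z_0=0$ on $S_{u_0}$. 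Second, positivity of $Z_0$ is then obtained locally: on each component $\omega_n$ of the complement of $S_{u_0}$ (an interval of length $<\pi$) one solves $v''+v=\psi\geq 0$ with $v\in H^1_0(\omega_n)$ and extends by zero; the Dirac masses of $v''+v$ at $\partial\omega_n$ are harmless precisely because $Z_0$ vanishes there, and \eqref{zeta_0<0} then gives $\int_{\omega_n}Z_0\psi\geq 0$. Without these two steps the proposal does not establish \eqref{zeta_0}, and the first-order identity \eqref{eq:euler1} with a nonnegative $\zeta_0$ vanishing on $S_{u_0}$ --- which is exactly what is used later in the paper --- remains unproved.
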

\begin{remark}\rm\label{rem:spaces}
We choose here to work in the space $H^1(\T)$, whereas the problem is more naturally settled in $W^{1,\infty}(\T)$. This choice is motivated by the fact that $H^1(\T)$ is reflexive. If we had worked in $W^{1,\infty}(\T)$, we would have obtained a Lagrange multiplier in the bidual $W^{1,\infty}(\T)''$, which is not so easy to make explicit. Nevertheless, this choice of $H^1(\T)$ leads to this new difficulty: for $G$ regular, the functional $j(u)=\int_\T G(\theta,u,u')$ is generally not well defined on $H^1(\T)$, and so we cannot directly apply Proposition \ref{prop:euler}. We explain in Section \ref{ssect:concave} the adjustments that are needed to apply this one.
\end{remark}
\begin{proof}
We apply Proposition \ref{main} with the notations just introduced above. The main assumption $g'(u_0)(U)=Y$ is satisfied since $g'(u_0)=(g_0,I,-I)$ where $I$ denotes the identity. By the statement (i) there exists $l=(l_0,l_a,l_b)\in Y'_+$ and thanks to the remarks (\ref{u+u''}), (\ref{zeta}), there exists $(\zeta_0,\mu_a,\mu_b)\in U\times U'\times U'$ such that\\
\begin{itemize}
\item $\forall v\in U, j'(u_0)(v)=\langle\zeta_0+\zeta''_0,v\rangle_{U'\times U}+\int_\T vd\mu_a-\int_\T vd\mu_b$,
\item $\mu_a$ is a nonnegative measure and $\int_\T (u-a)d\mu_a=0$ or $Supp(\mu_a)\subset\{u_0=a\}$,
\item  $\mu_b$ is a nonnegative measure and $\int_\T (b-u)d\mu_b=0$ or $Supp(\mu_b)\subset\{u_0=b\}$,
\item  $(\zeta_0,\cos)_{U\times U}=(\zeta_0,\sin)_{U\times U}=0,\;$  $\int_\T \zeta_0d(u_0+u_0'')=0$ and
\begin{equation}\label{zeta_0<0}
\forall v\in U\;with\;g_0(v)\geq 0,\; \int_\T \zeta_0v-\zeta_0'v'\geq 0.
\end{equation}
\end{itemize}
Let now $v\in U$ with $v+v''=\psi(\theta)(u_0+u_0'')$ with $\psi$ Borel measurable and bounded. Then, 
$g_0(\|\psi\|_{L^\infty} u_0\pm v)\geq 0$ so that 
$\langle\zeta_0, g_0(\|\psi\|_{L^\infty} u_0\pm v)\rangle_{U\times U'}\geq 0$. It follows that 
\begin{equation}\label{1}
|\langle\zeta_0,g_0(v)\rangle_{U\times U'}|\leq \|\psi\|_{L^\infty} \langle\zeta_0, g_0(u_0)\rangle_{U\times U'}=0.
\end{equation}
But this information on $\zeta_0$ is not sufficient to obtain the first property of \eqref{zeta_0}, namely $\zeta_0(u_0''+u_0)\equiv0$. For this, we now show that it is possible to change $\zeta_0$ into $Z_0=\zeta_0+a \cos  + b \sin$ so that all same properties remain valid, but also $Z_0(u_0''+u_0)\equiv0$.

Since $\int_\T(v+v'')\cos=0=\int_\T(v+v'')\sin$, we also have \eqref{eq:euler1} for $Z_0$ in place of $\zeta_0$. Moreover, \eqref{1} is also true for $Z_0$, that is to say: for every  $\psi$ Borel measurable and bounded such that $v+v''=\psi(\theta)(u_0+u_0'')$ for some $v\in U$, $\int_\T\psi Z_0d\nu=0$, where we denote $\nu=u_0+u_0''$. 

Let us show that we can find $a,b\in\R$ so that $\;Z_0\nu\equiv 0\;$ and $Z_0\geq 0$, and the proof of Proposition \ref{prop:euler} will be complete.\\
Let us choose $a,b$ so that 
\begin{eqnarray}\label{oh}
\int_\T\cos \theta Z_0(\theta)d\nu(\theta)=0=\int_\T\sin \theta Z_0(\theta)d\nu(\theta),
\end{eqnarray}
which writes
\begin{eqnarray}
\left\{
\begin{array}{l}
\int_\T\cos \theta \zeta_0(\theta) d\nu(\theta)+a\int_\T\cos^2 \theta\,d\nu(\theta)+b\int_\T\cos \theta \sin\theta d\nu(\theta)=0,\\
\int_\T\sin \theta \zeta_0(\theta) d\nu(\theta)+a\int_\T\sin \theta\cos\theta\,d\nu(\theta)+b\int_\T\sin^2\theta d\nu(\theta)=0.
\end{array}
\right.
\end{eqnarray}
This is possible since $CS-B^2\neq 0$ where
$$C=\int_\T\cos^2\theta \,d\nu(\theta),\;S=\int_\T\sin^2\theta \,d\nu(\theta),\;B=\int_\T\cos \theta\sin\theta\, d\nu(\theta).$$
Indeed, by Schwarz' inequality, we have $B^2\leq CS$ and equality would hold only if we had
$$ \cos \theta=\lambda \sin \theta\;\; \nu-a.e\; \theta,$$
for some $\lambda\in \R$  and it is not the case since $\nu$ has at least 3 distinct points in its support.\\
Let now $\varphi$ be a Borel measurable bounded function on $\T$. Let $\psi:=\varphi+\alpha \cos+\beta\sin$ where $\alpha,\beta\in\R$ are chosen so that $\psi\nu=v+v''$ for some $v\in U$, or equivalently
\begin{eqnarray}
\left\{
\begin{array}{l}
\int_\T\cos \theta [\varphi+\alpha\cos+\beta \sin](\theta)d\nu(\theta)=0,\\
\int_\T\sin \theta [\varphi+\alpha\cos+\beta \sin](\theta)d\nu(\theta)=0.
\end{array}
\right.
\end{eqnarray}
Again, this is possible since $CS-B^2\neq 0$. Next, we deduce from (\ref{oh}), then from (\ref{1}) that
$$\int_\T\varphi Z_0d\nu=\int_\T\psi Z_0 d\nu=0.$$
By arbitrarity of $\varphi$, this implies $Z_0\nu\equiv 0$ as expected. This gives \eqref{zeta_0} and \eqref{eq:euler1} with $Z_0$ in place of $\zeta_0$.\\
We now prove that $Z_0$ is nonnegative : $Supp(u_0''+u_0)^c=\bigcup_n\omega_n$ where $\omega_n$ are open intervals. Then, if $\psi\geq 0$ is regular with a compact support in $\omega_n$, we can introduce $v\in H^1_0(\omega_n)$ satisfying $v''+v=\psi$ in $\omega_n$ (possible since $diam(\omega_n)<\pi$). We define $v$ by 0 outside $\omega_n$. Thus $v''+v$ has Dirac mass at $\partial\omega_n$, but since $Z_0$ vanishes at $\partial\omega_n$, we finally get, using \eqref{zeta_0<0}:
$$\int_{\omega_n} Z_0\psi d\theta=\int_{\T} Z_0d(v''+v) = 0.$$
Since $\psi$ is arbitrary, we get $Z_0\geq 0$ in $\omega_n$, and then $Z_0\geq 0$ in $\T$.

By the statement (ii) of Proposition \ref{main}, for each $v\in U$ satisfying
\begin{eqnarray}\label{tangent}
f'(u_0)(v)=0, \;\exists \lambda \in\R, \;g_0(v)\geq \lambda g_0(u_0),\; v\geq \lambda(u_0-a),\;v\leq \lambda (u_0-b),
\end{eqnarray}
we have $f''(u_0)(v,v)\geq 0$ (the constraint $g$ is linear, so $g''=0$). Whence Proposition \ref{prop:euler}, with $Z_0$ in place of $\zeta_0$.\qed
\end{proof}
\begin{remark}\rm
In general, the positivity of $\zeta_0$ on the orthogonal of $\{cos,sin\}$ does not imply that it is pointwise positive (one can write explicit examples).
\end{remark}
\begin{remark}\rm
In the following section, the main difficulty will be to analyze the situation where the convexity constraint is almost everywhere saturated. It would be easy to prove the non-existence of an nonempty interval $I\subset S_{u_0}\cap\{a<u_0<b\}$. However, this is not sufficient to conclude that $u_0''+u_0$ is a sum of Dirac masses (we can look at the Lebesgue decomposition of measures to see this).
That is why we have to analyze the case of infinitely many corners, or even of a diffuse singular measure (see the proof of Theorem \ref{th:concave}).\\
Another way to avoid these difficulties has been chosen by M. Crouzeix in \cite{C05Une} for his particular problem (see Remark \ref{rk:MC}): he considers the minimization problem
restricted to convex polygons having at most $n$ edges, and proves that with $n$ large enough, any
solution in this restricted class has only $n_0$ edges where $n_0$ is only determined by $a$ and $b$. Therefore, using the density of convex polygons in convex sets, the solution for this particular problem (\ref{eq:min-g}) is still a polygon.
\end{remark}
\begin{remark}\rm
Our analysis in Section \ref{sect:proofs} could easily show on some simple examples that the first order equation is not sufficient to get the results of Theorems \ref{th:concave}, \ref{th:polygon-g} or \ref{th:polygon-g-bis}. It turns out that the second order condition is very helpful.
\end{remark}

\noindent {\bf {\large Second problem:}}\\
Similarly, we can give the optimality conditions in the case of the measure constraint:
\begin{equation}\label{eq:min-g+m0}
\min \left\{ j(u),\quad u\in H^1(\mathbb T),\quad u''+u\geq 0,\quad
m(u):=\frac{1}{2}\int_\T \frac{d\theta}{u^2} = m_0
\right\},
\end{equation}
\begin{prop}\label{prop:euler+m}
If $u_0$ solves (\ref{eq:min-g+m0}) where $j:H^1(\T)\to\R$ is $\C^2$, then there exist $\zeta_0 \in H^1(\T)$ nonnegative, $\mu\in\R$ such that 
\begin{eqnarray}\label{zeta_0-bis}
&&\zeta_0=0\;on\;S_{u_0},\\
\mbox{ and }&\forall\; v\in H^1(\T),&
j'(u_0)v =\displaystyle{
\langle\zeta_0+\zeta_0'',v\rangle_{U'\times U}-\mu m'(u_0)(v)}.\label{euler1-ter}
\end{eqnarray}
Moreover, for all $v\in H^1(\T)$, such that $\exists\lambda\in\R$ satisfying 
$\left\{\begin{array}{l}
v''+v\geq \lambda(u_0''+u_0)\\
\langle\zeta_0+\zeta_0'',v\rangle_{U'\times U}-\mu m'(u_0)(v)=0
\end{array}\right.$,
\begin{equation}\label{H-g+m>0}
\mbox{ we have }j''(u_0)(v,v)+\mu m''(u_0)(v,v)\geq 0.
\end{equation}
\end{prop}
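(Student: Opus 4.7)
The proof follows the same overall strategy as that of Proposition~\ref{prop:euler}, replacing the bilateral bound constraints by the scalar equality constraint $m(u)=m_0$. I set $U=H^1(\T)$, $Y=g_0(U)\times\R$ endowed with its natural Hilbert structure, $g(u)=(g_0(u),m(u)-m_0)$, and the closed convex cone $K=\{(z,t)\in Y\;;\;z\ge 0 \text{ in }U',\;t=0\}$. The dual cone $Y'_+$ imposes nonnegativity on the multiplier of the convexity constraint but leaves the multiplier $\mu$ of the measure constraint free in sign, as is standard for equality constraints. The surjectivity hypothesis $g'(u_0)(U)=Y$ amounts to verifying that $m'(u_0)(v)=-\int_\T v/u_0^3\,d\theta$ is not identically zero on $\ker g_0=\mathrm{span}(\cos,\sin)$; this is generic and, if needed, can be arranged by a harmless rotation of the angular variable.

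Proposition~\ref{main}(i) then yields a multiplier $(l_0,-\mu)\in Y'_+$ and the Lagrange identity $j'(u_0)(v)=l_0(g_0(v))-\mu\, m'(u_0)(v)$ together with the complementarity $l_0(g_0(u_0))=0$. Representing $l_0$ by $\zeta_0\in H^1(\T)$ via~\eqref{zeta} gives~\eqref{euler1-ter}. I then reproduce verbatim the adjustment from the proof of Proposition~\ref{prop:euler}: first bound $|\langle\zeta_0,g_0(v)\rangle|\le\|\psi\|_\infty\langle\zeta_0,g_0(u_0)\rangle=0$ whenever $v''+v=\psi(u_0''+u_0)$; then modify $\zeta_0$ into $Z_0=\zeta_0+\alpha\cos+\beta\sin$ with $\alpha,\beta$ solving a $2\times 2$ linear system whose nondegeneracy follows from the fact that $\nu:=u_0''+u_0$ has at least three distinct points in its support; conclude $Z_0\nu\equiv 0$, so that $Z_0=0$ on $S_{u_0}$. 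Pointwise nonnegativity of $Z_0$ follows from the same local test-function argument on each open interval of $\R\setminus S_{u_0}$.

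For the second-order condition, I apply Proposition~\ref{main}(ii) to $F(u):=j(u)-l_0(g_0(u))+\mu(m(u)-m_0)$. Since $l_0\circ g_0$ is linear, its second derivative vanishes and $F''(u_0)(v,v)=j''(u_0)(v,v)+\mu\, m''(u_0)(v,v)$, which is the expression in~\eqref{H-g+m>0}. The tangent cone $T_{u_0}$ unpacks to the conditions $g_0(v)\ge\lambda g_0(u_0)$ for some $\lambda\in\R$ (from the convexity part of $K_{g(u_0)}$), $m'(u_0)(v)=0$ (from the equality part, since no shift by $\lambda g(u_0)$ is possible in the second coordinate), and $j'(u_0)(v)=0$; combining the last two via the Euler relation produces the constraint $\langle Z_0+Z_0'',v\rangle-\mu\, m'(u_0)(v)=0$ stated in the proposition.

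The main obstacle, relative to Proposition~\ref{prop:euler}, is the nonlinearity of $m$: it forces the curvature term $\mu\, m''(u_0)(v,v)$ in the second-order inequality and complicates the tangent-cone analysis, since the alternative construction of admissible perturbations described in Remark~\ref{Tu0} requires projecting onto the manifold $\{m=m_0\}$ via an implicit-function argument whose correction is of order $t$ rather than $o(t)$. Apart from this, the surjectivity check and the correction of $\zeta_0$ by $\alpha\cos+\beta\sin$ carry over essentially unchanged from the bilateral-bounds case.
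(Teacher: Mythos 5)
Your proposal follows the paper's route exactly: the same choices $Y=g_0(U)\times\R$, $g(u)=(g_0(u),m(u)-m_0)$, $K=\{z\geq 0\;\mathrm{in}\;U'\}\times\{0\}$, the application of Proposition \ref{main}, and the verbatim transport of the $Z_0=\zeta_0+\alpha\cos+\beta\sin$ correction and the local test-function positivity argument from Proposition \ref{prop:euler}; the paper's own proof is essentially the two lines that you expand, and your reading of the dual cone (sign-free $\mu$) and of the second-order term $j''+\mu m''$ is correct.

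The one step that would fail as written is your treatment of the constraint qualification. You correctly reduce $g'(u_0)(U)=Y$ to the requirement that $m'(u_0)(v)=-\int_\T v\,u_0^{-3}\,d\theta$ not vanish identically on $\ker g_0=\mathrm{span}(\cos,\sin)$, but the proposed remedy --- ``a harmless rotation of the angular variable'' --- cannot work: a rotation $\theta\mapsto\theta+\theta_0$ acts on the pair $\bigl(\int_\T\cos\theta\,u_0^{-3},\ \int_\T\sin\theta\,u_0^{-3}\bigr)$ by an orthogonal map, so if both moments vanish they vanish in every rotated frame. Nor is the condition ``generic'' in a usable sense: up to a factor $3$ these two moments are the coordinates of the centroid of $\Omega_{u_0}$, so they vanish simultaneously precisely when $\Omega_{u_0}$ is centered at the origin (e.g.\ $u_0$ constant), a perfectly admissible configuration for a minimizer. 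To be fair, the paper's own justification (surjectivity ``using $\min(u_0)>0$'') is a bare assertion exposed to the same degenerate case; but since you chose to make this step explicit, the fix you offer does not close it --- one would instead need either a multiplier rule valid when $g'(u_0)$ has closed range of finite codimension, or a direct argument in the centered case. Finally, your closing concern about Remark \ref{Tu0} is misplaced: for $v$ with $m'(u_0)(v)=0$, projecting $u_0+tv$ back onto $\{m=m_0\}$ costs $O(t^2)=o(t)$, so \eqref{T_u} is not endangered by the nonlinearity of $m$.
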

\begin{proof}
We make the same choices except for
$$Y=g_0(U)\times\R,\;g:U\to Y, \forall u\in U,\;g(u)=(g_0(u),m(u)-m_0),$$
and $K= \{z \in g_0(U),\; z\geq 0\;in\;U'\}\times \{0\}\subset Y$.

Here, using $min(u_0)>0$, we have
$$\forall v\in U, g'(u_0)(v)=(g_0(v),m'(u_0)(v))=\left(g_0(v),-\int_\T \frac{v\,d\theta}{u_0^3}\right)$$ 
and $g'(u_0)(U)=g_0(U)\times  \R=Y$. Therefore, we may apply Proposition \ref{main}, and similarly to the proof of Proposition \ref{prop:euler}, we get the result.\qed
\end{proof}

\section{Proofs}\label{sect:proofs}
\subsection{Proof of Theorem \ref{th:concave}, case of inclusion in $A(a,b)$}\label{ssect:concave}
First of all, we have to prove that $u'$ is bounded by a constant $C(b)$, for all $u$ admissible.
\begin{lemma}\label{lem:borne-u'}
$$\forall u\in H^1(\T),\;\Big[\;0\leq u\leq b, u''+u\geq 0\; \Longrightarrow\; \|u'\|_{L^\infty} \leq 2\pi b=:C(b)\; \Big]$$
\end{lemma}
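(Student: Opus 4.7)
The idea is that $u''+u\ge 0$ together with $0\le u\le b$ forces $u''$ to be a signed measure of small total mass, which in turn controls the oscillation of $u'$, and then periodicity upgrades oscillation into an $L^\infty$ bound.

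First I would establish that $u''+u$ is a finite nonnegative Radon measure whose total mass is at most $2\pi b$. Testing the defining inequality \eqref{eq:u''+u>=0} with the constant $v\equiv 1$ yields
$$
\int_\T d(u''+u)=\int_\T u\,d\theta\le 2\pi b,
$$
and then $u''=(u''+u)-u\,d\theta$ is a (signed) Radon measure with finite total variation. Consequently $u'$ admits a BV representative, whose jumps are given by the atoms of $u''+u-u\,d\theta$, and for any $\theta_1<\theta_2$,
$$
u'(\theta_2)-u'(\theta_1)=(u''+u)\bigl([\theta_1,\theta_2)\bigr)-\int_{\theta_1}^{\theta_2}u(\theta)\,d\theta.
$$

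Next I would bound each term separately when $\theta_2-\theta_1\le 2\pi$. The first term lies in $[0,2\pi b]$ by nonnegativity of the measure and by the total mass estimate just obtained. The second term lies in $[0,2\pi b]$ since $0\le u\le b$ and $\theta_2-\theta_1\le 2\pi$. Subtracting, $|u'(\theta_2)-u'(\theta_1)|\le 2\pi b$, hence the (essential) oscillation of $u'$ satisfies
$$
\sup_\T u' - \inf_\T u'\le 2\pi b.
$$

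Finally I would use periodicity to convert the oscillation bound into an $L^\infty$ bound. Since $u\in H^1(\T)$ is $2\pi$-periodic, $\int_\T u'\,d\theta=0$, so $\sup_\T u'\ge 0\ge \inf_\T u'$; therefore both $\sup_\T u'$ and $-\inf_\T u'$ are bounded by $\sup_\T u'-\inf_\T u'\le 2\pi b$, giving $\|u'\|_{L^\infty}\le 2\pi b=C(b)$.

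The only mild subtlety is justifying that $u'$ has a pointwise representative for which the telescoping identity above makes sense; this is taken care of by the fact, already noted, that $u''$ is a Radon measure, so $u'\in BV(\T)$. Everything else is elementary, and I do not anticipate any real obstacle.
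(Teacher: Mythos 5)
Your proof is correct and follows essentially the same route as the paper: both control the increment of $u'$ over an interval of length at most $2\pi$ by integrating $u''+u\geq 0$ against $0\leq u\leq b$, and both use periodicity (the paper via a point where $u'$ has a favourable sign, you via $\int_\T u'=0$) to convert that into the $L^\infty$ bound. Your extra step of bounding the total mass of $u''+u$ by testing with $v\equiv 1$ is a harmless addition that the paper's one-sided version of the argument does not need.
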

\textbf{Proof of lemma \ref{lem:borne-u'}}
Since $u$ is periodic, there exists $x_0\in\T$ such that $u'(x_0)\geq 0$. With $x\in[x_0,x_0+2\pi]$ and integrating the inequality $u''+u\geq 0$, we get
$u'(x)-u'(x_0)+\int_{x_0}^x u \geq 0$
which leads to $u'(x)\geq -2\pi b$, true for all $x\in\R$ by periodicity.\\
Similarly with $x_1$ such that $u'(x_1)\leq 0$ and $x\in[x_1-2\pi,x_1]$, we get $u'(x)\leq 2\pi b$
which leads to the result with $C(b)=2\pi b$.\qed

\begin{remark}\rm\label{r:tildeG}
With the help of this lemma, let us explain how we can use Proposition \ref{prop:euler}, whereas $j(u)=\int_\T G(\theta,u,u')$ is a priori not defined on $H^1(\T)$:
if $\eta(u,p)$ is a $\mathcal C^\infty$ cut-off function, with $0\leq\eta\leq 1$ and such that
\[
\eta=\left\{
\begin{array}{lll}
1,& (u,p)\in [a/2,2b]\times[-2C(b),2C(b)],\\
0,& \mbox{otherwise},
\end{array}
\right.
\]
where $C(b)$ is introduced in Lemma \ref{lem:borne-u'}, then we can set $\widetilde{j}(u):=\int_{\mathbb T}\widetilde{G}(\theta,u,u')d\theta$, with $\widetilde{G}(\theta,u,p):= \eta(u,p)G(\theta,u,p)$. Easily, the new functional $\widetilde{j}$ is $\mathcal C^k$ in $H^1(\mathbb T)$ if $G$ is $\mathcal C^k$ in $\T\times\R\times\R$. Moreover, by the choice of $\eta$, any solution of the problem (\ref{eq:min-g}) is still solution for $\widetilde{j}$ instead of $j$, and we can write first and second order necessary conditions for the function $\widetilde{j}$, in terms of $\widetilde{G}$.\\
We easily check that $\widetilde{G}$ still satisfy the hypothesis in Theorem \ref{th:concave}, since $\eta=1$ in a neighborhood of $[a,b]\times[-C(b),C(b)]$ (this will also be true for Theorems \ref{th:polygon-g} and \ref{th:polygon-g-bis}). We drop the notation $\widetilde{\cdot}$ in all what follows.
\end{remark}

\noindent\textbf{Proof of Theorem \ref{th:concave}, case of inclusion in $A(a,b)$:}\\
Assume by contradiction that $u_0$ does not satisfy the conclusion.
Therefore there exists an interval $I\subset\{a<u_0<b\}$ and $\theta_0$ an accumulation point  of
$S_{u_0}\cap I$.\\
~\\
\begin{minipage}{95mm}
\textbf{(a) Case $a<u_0(\theta_0)<b$.}\\
Without loss of generality we can assume
$\theta_0=0$ and also that there exists a decreasing sequence $(\eps_n)$ tending to $0$ such that
$S_{u_0}\cap(0,\eps_n)\neq \emptyset$. Then we follow an idea of T. Lachand-Robert
and M.A. Peletier (see \cite{LP01New}). We can always find $0<\eps_n^i<\eps_n$, $i=1,\ldots,4$, increasing with respect to $i$, such that $S_{u_0}\cap(\eps_n^i,\eps_n^{i+1})\neq\emptyset$, $i=1,3$. We consider $v_{n,i}$ solving
\[
 v_{n,i}''+v_{n,i}=\chi_{(\eps_n^i,\eps_n^{i+1})}(u_0''+u_0),\quad v_{n,i}=0 \mbox{ in }
(0,\eps_n)^c,\; i=1,3.
\]
Such $v_{n,i}$ exist since we avoid the spectrum of the Laplace operator with Dirichlet boundary conditions.
Next, we look for $\lambda_{n,i},\ i=1,3$ such that
${\displaystyle v_n=\sum_{i=1,3}\lambda_{n,i} v_{n,i}}$ satisfy
\[
v'_n(0^+)=v'_n(\eps_n^-)=0.
\]
\end{minipage}
\begin{minipage}{70mm}
\figinit{.9pt}
\figpt 0:(0,0)
\figpt 1:(0,100)
\figpt 2:(-20,0)

\figpt 3:(0,85)
\figpt 4:(31,79)
\figpt 5:(50,60)
\figpt 6:(55,40)
\figpt 7:(54,30)
\figpt 8:(50,20)
\figpt 9:(47,15)
\figpt 10:(45,13)

\figpt 11:(0,-30)
\figpt 12:(-50,-20)
\figpt 13:(-50,50)
\figpt 14:(-30,70)
\figpt 15:(40,-30)

\psbeginfig{}
\psset(width=0.3)

\psarccirc 0;100(-20,130)
\pscirc 0(20)

\psset(width=0.8)
\psarccirc 0 ; 85(68.5,90)
\psline[4,5,6,7,8,9]
\psline[10,11,12,13,14,3]
\psset(dash=8)
\psline[9,10]

\psset(width=0.2)
\psline[0,3]
\psline[0,4]
\psline[0,5]
\psline[0,6]
\psline[0,7]
\psline[0,8]
\psline[0,9]
\psline[0,10]

\psendfig
\figvisu{\figBoxA}{\bf Figure 1: Case (a)}{
\figwriten 1:{$\frac{1}{a}$}(2)
\figwritew 2:{$\frac{1}{b}$}(2)
\figwritene 5:{$\eps_n$}(2)
\figwritew 12:{$\Om_{u_0}$}(2)
\figwritee 7:{$\eps_n^i$}(4)
\figwritee 10:{$\theta_0$}(4)
\figwrite [15]{$A(a,b)$}
}
\centerline{\box\figBoxA}
~\\
\end{minipage}
\\
The above derivatives exist since $v_{n,i}$ are regular near $0$ and $\eps_n$ in $(0,\eps_n)$.
We can always find such $\lambda_{n,i}$ as they satisfy two linear equations.
It implies that $v_n''$ does not have any Dirac mass at $0$ and $\eps_n$.
Since $S_{u_0}\cap(\eps_{n}^i,\eps_{n}^{i+1})\neq \emptyset$, we have $v_n\neq 0$. From \eqref{zeta_0} and $Supp(v_n)\subset \{a<u_0<b\}$ it follows that for such $v_n$ we have
\[
\int_{\T}v_n(\zeta_0+\zeta_0'')=
\int_{\T} v_n d\mu_a=\int_{\T} v_n d\mu_b=0.
\]
Using the first order Euler-Lagrange equation (\ref{eq:euler1}), we get $j'(u_0)(v_n)=0$.
Consequently, $v_n$ is eligible for the second order necessary condition (it is easy to check the other conditions required in Proposition \ref{prop:euler}).  So, using (\ref{eq:euler2}), we get
\begin{equation*}
0\leq
j''(u_0)(v_n,v_n)=
\int_{\T}
G_{uu}(\theta,u_0,u_0')v_n^2 + 
2 G_{up}(\theta,u_0,u_0')v_n v_n' + G_{pp}(\theta,u_0,u_0'){v'_n}^2.
\end{equation*}
Using the concavity assumptions \eqref{eq:concave} on $G$, it follows that
\begin{eqnarray}\label{eq:main}
0&\leq&
j''(u_0)(v_n,v_n)
\leq
\int_{\T}
K_{uu}v_n^2 + 2K_{up}|v_n||v_n'| - K_{pp}|v_n'|^2 \nonumber\\\label{eq:eps_n}
&\leq&
\left(
    \left(\frac{\eps_n}{\pi}\right)^2K_{uu}+2\frac{\eps_n}{\pi}K_{up} -
    K_{pp}\right)\|v_n'\|^2_{L^2},
\end{eqnarray}
where, if we set $R:=\T\times[a,b]\times[-C(b),C(b)]$, we have
\begin{equation}\label{eq:K_}
K_{uu}=\sup_R |G_{uu}|,\quad 
K_{up}=\sup_R |G_{up}|,\quad
K_{pp}=\inf_{\T} |G_{pp}(\theta,u_0(\theta),u_0'(\theta))|>0.
\end{equation}
In order to get \eqref{eq:eps_n}, we have used Poincar\'e's inequality $\forall\; v\in H^1_0(0,\eps),\;\int_0^\eps u^2 \leq \left(\frac{\eps}{\pi}\right)^2\int_0^\eps u'^2$, with
$\eps=\eps_n$.
As $\eps_n$ tends to $0$, the inequality (\ref{eq:eps_n}) becomes impossible and
proves that $S_{u_0}$ has not interior accumulation points.
It follows that $u_0''+u_0$ is a sum of positive Dirac masses,
$u_0''+u_0=\sum_{n\in\N}\alpha_n \delta_{\theta_n}$ in $\{a<u_0<b\}$.\\
%
~\\
\begin{minipage}{100mm}
\textbf{(b) Case $u_0(\theta_0)=a$.} From (a), it follows that near $\theta_0$ and at least from one side of it we have 
$u_0''+u_0=\sum_{n\in \N^*}\alpha_n\delta_{\theta_n}$ where $\{\theta_n\}$ is a sequence such that $\theta_n\to\theta_0$, $\theta_n\in S_{u_0}\cap I$ and $\alpha_n>0$. Without restriction, we may take $\theta_0=0$ and assume that $\theta_n>0$ is decreasing. For every $n$ we consider
$v_n\in H_0^1(\theta_{n+1},\theta_{n-1})$ satisfying
$v_n''+v_n=\delta_{\theta_n}$ in $(\theta_{n+1},\theta_{n-1})$.
In $\T$, the measure $v_n''+v_n$ is supported in $\{\theta_{n+1},\theta_n,\theta_{n-1}\}$, and since these points are in $S_{u_0}$, and since $u_0$ does not touch $a$ in a neighborhood of $[\theta_{n+1},\theta_{n-1}]$, we can choose $\lambda\ll0$ (depending on $n$) such that 
$$
\left\{\begin{array}{l}
v_n''+v_n\geq \lambda(u_0''+u_0)\\
v_n\geq\lambda(u_0-a),v_n\leq\lambda(u_0-b).
\end{array}\right.$$
\end{minipage}
\begin{minipage}{60mm}
\figinit{1.1pt}
\figpt 0:(0,0)
\figpt 1:(-20,100)
\figpt 2:(-5,20)

\figpt 3:(0,85)
\figpt 4:(40,79)
\figpt 5:(65,65)
\figpt 6:(83,50)
\figpt 7:(90,40)
\figpt 8:(92.5,35)
\figpt 9:(94,32)
\figpt 10:(95,30)


\psbeginfig{}

\psarccirc 0; 100(0,100)
\psarccirc 0; 20(0,100)

\psset(width=0.8)
\psline[3,4,5,6,7,8,9,10]
\psset(dash=8)
\psset(width=0.2)
\psline[0,3]
\psline[0,4]
\psline[0,5]
\psline[0,6]
\psline[0,7]
\psline[0,8]
\psline[0,9]
\psline[0,10]

\psendfig
\figvisu{\figBoxA}{\bf Figure 2: Case (b)}{
\figwritew 1:{$\frac{1}{a}$}(2)
\figwritew 2:{$\frac{1}{b}$}(2)
\figwritee 7:{$\theta_n$}(3)
\figwritee 10:{$\theta_0$}(2)
\figwritew 3:{$\partial\Omega_{u_0}$}(2)
}
\centerline{\box\figBoxA}
\end{minipage}
~\\
Moreover, 
 since $v_n$ is supported in $\{a<u_0<b\}$, we finally get, using \eqref{zeta_0}, $\int_\T vd(\zeta_0+\zeta_0''+\mu_a-\mu_b)=0$,
and so the function $v_n$ is admissible for the second order necessary condition. Proceeding as
in (a) above, we find a contradiction which proves that this case is impossible. \\
~\\
\textbf{(c) Case $u_0(\theta_0)=b$.} This case is
treated similarly to the case (b).\qed
\begin{cor}\label{cor:borne-u'}
We have $\|u_0'\|_{L^\infty}\leq \sqrt{2b(b-a)}$. More generally,
if $u\in H^1(\mathbb T)$, $0<\alpha\leq u\leq \beta<\infty$ and 
$u''+u\geq0$ with $|\{\alpha <u <\beta\}\cap{\rm Supp}(u''+u)|=0$ then
$$
\|u'\|_{L^\infty} \leq \sqrt{2\beta(\beta-\alpha)}.
$$
\end{cor}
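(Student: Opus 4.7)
The plan is to reduce the first assertion to the second via Theorem \ref{th:concave}, then prove the general estimate using a geometric observation about supporting lines of $\Om_u$. For the first assertion: if $u_0$ solves \eqref{eq:min-g}, Theorem \ref{th:concave} yields that $S_{u_0} \cap I$ is finite for every open $I \subset \{a < u_0 < b\}$, hence $S_{u_0} \cap \{a < u_0 < b\}$ is at most countable and in particular Lebesgue-negligible; thus the hypothesis of the second, more general statement is met with $\alpha=a$, $\beta=b$.

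To prove the general estimate I would examine $u$ on the open set $O:=\T \setminus S_u$. On each connected component $J$ of $O$, the distribution $u''+u$ vanishes, so $u(\theta) = R\cos(\theta - \phi)$ for constants $R>0$ and $\phi$ depending on $J$, and $u^2 + u'^2 \equiv R^2$ on $J$. In Cartesian coordinates, the corresponding arc of $\partial\Om_u$ lies on the line
\[
L_J:\; x\cos\phi + y\sin\phi = 1/R,
\]
whose distance from the origin equals $1/R$.

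The key step is to show $R \leq \beta$. Since $u''+u\geq 0$, the set $\Om_u$ is convex, and since $u\leq\beta$ its boundary lies at distance $\geq 1/\beta$ from the origin. Combined with the star-shapedness of $\Om_u$ with respect to the origin, this implies $\Om_u \supset D(0,1/\beta)$. Now $L_J$ carries an entire arc of $\partial\Om_u$, so by convexity $\Om_u$ lies on one side of $L_J$; in particular $L_J$ does not cross the open disk $D(0,1/\beta)$, which forces $\mathrm{dist}(0,L_J) \geq 1/\beta$, i.e.\ $R \leq \beta$. Consequently, on each $J$,
\[
u'^2 = R^2 - u^2 \leq \beta^2 - \alpha^2.
\]

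On the complement $S_u$, the hypothesis $|\{\alpha<u<\beta\} \cap S_u| = 0$ together with continuity of $u$ forces any absolutely continuous piece of $u''+u$ to be concentrated on the closed set $\{u=\alpha\}\cup\{u=\beta\}$; on each interval where $u$ is constant we have $u'=0$ a.e., and the remaining points of $S_u$ form a Lebesgue null set, hence do not affect the essential supremum. Collecting, $|u'|\leq\sqrt{\beta^2-\alpha^2}$ almost everywhere, and the stated bound follows at once from $\beta^2-\alpha^2=(\beta-\alpha)(\beta+\alpha)\leq 2\beta(\beta-\alpha)$. The main obstacle is the geometric claim $R \leq \beta$; the essential point is to recognise $L_J$ as a genuine supporting line of the convex body $\Om_u$, which combined with $\Om_u\supset D(0,1/\beta)$ delivers the desired inequality.
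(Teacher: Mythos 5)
Your proof is correct and follows essentially the same route as the paper: decompose $\T$ into the intervals where $u''+u=0$ (using the first integral $u^2+u'^2=\mathrm{const}$ there), the level sets $\{u=\alpha\}\cup\{u=\beta\}$ (where $u'=0$ a.e.), and the null set $\{\alpha<u<\beta\}\cap S_u$. The only difference is that you explicitly justify the bound $R\leq\beta$ via the supporting-line versus inscribed-disk argument, a step the paper asserts ($\alpha^2\leq\gamma^2\leq\beta^2$) without proof.
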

\textbf{Proof}.
We have
${\displaystyle \mathbb T=\cup_{n}\omega_n\cup(\{\alpha<u<\beta\}\cap{\rm Supp}(u''+u))}\cup F_\alpha\cup F_\beta$, where $F_\alpha:=\{u=\alpha\}$, $F_\beta:=\{u=\beta\}$ and $\omega_n\subset\{\alpha <u <\beta\}$ open interval with $u''+u=0$ in $\omega_n$.
As $u'=0$ a.e. in $F_\alpha\cup F_\beta$ and $|\{\alpha<u<\beta\}\cap{\rm Supp}(u''+u)|=0$, it's enough to estimate $u'$ only in $\omega_n$. From $u''+u=0$ in $\omega_n$ we get $|u'|^2 + u^2 = \gamma^2$ with 
$\alpha^2 \leq \gamma^2 \leq \beta^2$. Therefore
$|u'|^2 = \gamma^2 - |u|^2 \leq 2\beta (\beta-\alpha)$,
which proves the statement.
\qed
\begin{remark}\rm
In Theorem \ref{th:concave} we have to work in an open interval $I$ of $\{a< u_0<b\}$ as, at this stage, it is not true in general that $S_{u_0}\cap\{a\leq  u_0 \leq b\}$ is finite (see Section \ref{sect:sharpness}). 
This property will be proved later with extra assumptions on $G$ at the boundary (see the proofs of Theorems \ref{th:polygon-g} and \ref{th:polygon-g-bis}).
\end{remark}
\begin{remark}\rm\label{no-nodes}
Assume that $\omega\subset\overline{\omega}\subset\{a<u_0<b\}$, with $\omega$ an open connected set, and that $n_\omega=\#\{\theta_n\in\omega\}\geq 3$, with $n\to\theta_n$ increasing.
Consider $v\in H_0^1(\theta_1,\theta_3)$  satisfying $v''+v=\delta_{\theta_2}$.
The function $v$ is admissible for the second order necessary condition. Similarly to the
case (a) we find the following estimation:
$$\frac{\theta_3-\theta_1}{\pi}\geq \frac{K_{pp}}{K_{up}+\sqrt{K_{up}^2 + K_{uu}K_{pp}}}
=:C(G,a,b),$$
Therefore, we get
\begin{equation*}
\#\{\theta_n\in\omega\}\leq 2\left[\frac{2\pi}{C(G,a,b)}\right]+1,
\end{equation*}
where $[\cdot]$ denotes the floor function.
\end{remark}
\begin{remark}\rm
 Theorem \ref{th:concave} and its proof are valid for non integral operators: if $j(u)=g(u,u')$ with $g:(u,p)\in W^{1,\infty}(\T)\times L^\infty(\T)\mapsto g(u,p)\in\R$, of class $\C^2$ and satisfying
 $$|g_{uu}(u_0,u_0')(v,v)|\leq K_{uu} \|v\|_{L^\infty}^2,\;|g_{up}(u_0,u_0')(v,v')|\leq K_{up} \|v\|_{L^\infty}\|v'\|_{L^2},\;g_{pp}(u_0,u_0')(v',v')\leq -K_{pp} \|v'\|_{L^2}^2$$
 for some $K_{uu},K_{up},K_{pp}>0$, the main argument \eqref{eq:main} still works (with a more precise Poincar\'e inequality, valid in dimension 1, namely $\|u\|_{L^\infty(0,\eps)}\leq \sqrt{\eps}\|u'\|_{L^2(0,\eps)},\;\forall u\in H^1_0(0,\eps)$).
\end{remark}

\subsection{Proof of Theorem \ref{th:concave}, case of volume constraint}\label{ssect:concave+m}
First, we point out that as $0<u_0\in H^1(\mathbb T)$, we may assume that there exist $0<a<b$ such that
$a< u_0< b$. Therefore, similarly to the case of inclusion in the annulus (see Remark \ref{r:tildeG}), we introduce a cut-off function to get a new $\widetilde{G}$ and a new functional $\widetilde{j}$, which is equal to $j$ on $\{u\in H^1(\T)\;;\; a< u <b\textrm{ and }|u'|\leq C(b)\}$ and therefore,
any solution of the problem (\ref{eq:min-g+m}) is still solution of
\begin{equation}
\min \left\{ \widetilde{j}(u),\quad u\in H^1(\mathbb T),\quad a<u<b,\quad u''+u\geq 0,\quad m(u)= m_0
\right\}.
\end{equation}
We can apply Proposition \ref{prop:euler+m} and write first and second order necessary conditions for the function $\widetilde{j}$, in terms of $\widetilde{G}$ (the constraint $a<u<b$ does not appear in the optimality condition, because these constrains are not saturated).
It is easy to check that $\widetilde{G}$ still satisfies the hypothesis in Theorem \ref{th:concave}. In the following, we denote by $j$, resp. $G$, the function $\widetilde{j}$, resp. $\widetilde{G}$.\\

\noindent
Now, we assume by contradiction that $u_0$ does not satisfy the theorem. Therefore there exits at least one accumulation point $\theta_0$ of $S_{u_0}$.
Without loss of generality we can assume $\theta_0=0$, and that there exists a decreasing sequence $\{\eps_n>0\}$ tending to $0$ such that
$S_{u_0}\cap (0,\eps_n)\neq\emptyset$. Then we can always find
$0<\eps_n^i<\eps_n$, $i=1,\ldots,5$, decreasing with respect to $i$, such that
$S_{u_0}\cap(\eps_n^{i+1},\eps_n^i)\neq\emptyset$, $i=1,4$. We consider
$v_{n,i}$ solving
\[
 v_{n,i}''+v_{n,i}=\chi_{(\eps_n^{i+1},\eps_n^i)}(u_0''+u_0),\quad
v_{n,i}=0 \mbox{ in } (0,\eps_n)^c,\quad i=1,4.
\]
Next, we extend the same idea of \cite{LP01New} that we used in the first part of the proof (section \ref{ssect:concave}) as follows: we look for $\lambda_{n,i},\ i=1,4$ such that ${\displaystyle v_n=\sum_{i=1,4}\lambda_{n,i} v_{n,i}}$ satisfies
\[
v'_n(0^+)=v'_n(\eps_n^-)=m'(u_0)(v_n)=0.
\]
Note that the derivatives at $0^+$ and $\eps_n^-$ are well defined as $v_{n,i}$ are regular
nearby $0$ and $\eps_n$ in the interval $(0,\eps_n)$. Such a choice of $\lambda_{n,i}$ is always possible as $\lambda_{n,i}$ satisfy three linear equations. Moreover, $v_n$ is not zero since $S_{u_0}\cap (\eps_n^i,\eps_n^{i+1})\neq\emptyset$. Using \eqref{zeta_0-bis}, we get $\int_{\mathbb T}v_n(\zeta_0+\zeta_0'')=0$, which implies
\[
0=j'(u_0)(v_n)=\int_{\mathbb T}v_n(\zeta_0+\zeta_0'')=m'(u_0)(v_n).
\]
As $v_n''+v_n \geq \lambda (u_0''+u_0)$ for $\lambda\ll 0$, it follows that $v_n$ is eligible for the second order necessary condition. Then, using \eqref{eq:concave},
\begin{eqnarray*}
0&\leq&
j''(u_0)(v_n,v_n)=
\int_{\mathbb T}
\left(G_{uu}(\theta,u_0,u_0') + \frac{3\mu}{u_0^4}\right)v_n^2 + 
2 G_{up}(\theta,u_0,u_0')v_n v_n' + G_{pp}(\theta,u_0,u_0'){v'_n}^2 \\
&\leq &
\int_{\mathbb T}
\left(K_{uu} + \frac{3|\mu|}{a^4}\right)v_n^2 + 2K_{up}|v_n||v_n'| - K_{pp}|v_n'|^2 \\
&\leq&
(o(1) - K_{pp})\|v_n'\|^2_{L^2},
\end{eqnarray*}
with $o(1)\to0$ as $n\to\infty$, where we have used Poincar\'e's inequality in
$H_0^1(0,\eps_n)$ (see (\ref{eq:K_}) for the notation $K_{uu}$, $K_{up}$ and $K_{pp}$). As $n$ tends to $\infty$, the inequality $0\leq j''(u_0)(v_n,v_n)$ becomes impossible and this proves the theorem.\qed

\subsection{Proof of Theorem \ref{th:polygon-g}}
If $j$ satisfies the hypotheses of Theorem \ref{th:polygon-g}, we can apply Theorem \ref{th:concave} (see also Remark \ref{r:tildeG}). Therefore, it remains to prove the following result:
\begin{prop}\label{prop:boundary-g}
Under the assumptions of Theorem \ref{th:polygon-g}, the sets $\{u_0=a\}$ and $\{u_0=b\}$ are finite.
\end{prop}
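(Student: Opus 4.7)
Theorem~\ref{th:concave} already gives that $S_{u_0}$ is locally finite on the open set $\{a<u_0<b\}$, so Proposition~\ref{prop:boundary-g} reduces to ruling out two pathologies: (i) an open arc contained in $\{u_0=a\}$ or $\{u_0=b\}$, and (ii) an accumulation point of isolated touches of $u_0$ against $a$ or $b$. I would split the argument into three steps.

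\textbf{No arcs.} Suppose $I\subset\{u_0=a\}$ were an open interval. Then $u_0\equiv a$, $u_0'\equiv 0$ on $I$ and $u_0''+u_0\equiv a>0$, so $I\subset S_{u_0}$ and $\zeta_0\equiv 0$ on $\mathring I$. The evenness of $p\mapsto G(a,p)$ forces $G_p(a,0)=0$, so for $v\in C_c^\infty(\mathring I)$ the Euler equation~\eqref{eq:euler1} collapses to $G_u(a,0)\int v=\int v\,d\mu_a$. In case~(ii.1), choosing $v\ge 0$ nontrivial gives a strictly negative left-hand side against a nonnegative right-hand side, contradiction. In case~(ii.2), both sides vanish and $\mu_a\equiv 0$ on $\mathring I$, so any $v\in C_c^\infty(\mathring I)$, $v\ge 0$, is admissible for the second-order inequality~\eqref{eq:euler2}; for $v$ supported in a subinterval of length $\varepsilon$, Poincar\'e combined with $G_{pp}(a,0)<0$ (from~(i)) gives
\[
j''(u_0)(v,v)\le\bigl(\tfrac{\varepsilon^{2}}{\pi^{2}}K_{uu}+\tfrac{2\varepsilon}{\pi}K_{up}-K_{pp}\bigr)\|v'\|_{L^{2}}^{2}<0
\]
for $\varepsilon$ small, contradicting~\eqref{eq:euler2}. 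An exactly parallel argument with test functions $v\le 0$ (so that $v\le\lambda(u_0-b)=0$ at touches), and the dichotomy $G_u(b,0)>0$ (first order) versus $G_u(b,0)=0$ (second order with~(i) and~(iii)), rules out arcs inside $\{u_0=b\}$.

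\textbf{No accumulation on $\{u_0=a\}$.} Suppose $\theta_n\to\theta_0$ is a sequence of isolated touches. On each gap $(\theta_{n+1},\theta_n)$ the function $u_0>a$ is piecewise sinusoidal (Theorem~\ref{th:concave}), and a tangential matching $u_0(\theta_n^\pm)=a$, $u_0'(\theta_n^\pm)=0$ through a sinusoid of the form $a\cos(\theta-\theta_n)$ would force $u_0<a$ to one side. Hence $u_0'(\theta_n^-)<0<u_0'(\theta_n^+)$, $u_0''+u_0$ carries a strictly positive Dirac at $\theta_n$, and $\theta_n\in S_{u_0}$. Pick two disjoint triples $\sigma_1<\sigma_2<\sigma_3$ and $\tau_1<\tau_2<\tau_3$ of consecutive points of $S_{u_0}$ in a shrinking neighborhood of $\theta_0$ and let $v_i$ be the Green function in $H^1_0$ of the corresponding interval with source $\delta_{\sigma_2}$ (resp.\ $\delta_{\tau_2}$). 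Each $v_i>0$ inside, vanishes at the three endpoints, is admissible for $|\lambda|$ large, and satisfies $\langle\zeta_0+\zeta_0'',v_i\rangle=0$ since $v_i''+v_i$ is supported in $S_{u_0}\subset\{\zeta_0=0\}$. The combination $v=c_1v_1+c_2v_2$ with $(c_1,c_2)\ne 0$ solving the single linear equation $c_1\int v_1\,d\mu_a+c_2\int v_2\,d\mu_a=0$ then lies in $T_{u_0}$, while disjointness of the supports yields $j''(u_0)(v,v)=c_1^{2}j''(u_0)(v_1,v_1)+c_2^{2}j''(u_0)(v_2,v_2)<0$ by Poincar\'e and~(i), contradicting~\eqref{eq:euler2}.

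\textbf{No accumulation on $\{u_0=b\}$ -- the main obstacle.} At an isolated maximum touch $\theta_n$ one has $u_0'(\theta_n^-)\ge 0\ge u_0'(\theta_n^+)$, and since convexity $u_0''+u_0\ge 0$ forbids a negative Dirac there, in fact $u_0'(\theta_n^\pm)=0$ and \emph{no} Dirac is contributed at the touch. This is the main obstacle: the argument of the previous step cannot be run directly on the touches themselves, because they do not belong to $S_{u_0}$. However, the boundary data $u_0=b$, $u_0'=0$ at two distinct points $\theta_{n+1},\theta_n$ cannot be matched by a single sinusoid $A\cos\theta+B\sin\theta$ (each endpoint separately forces $A=b\cos\theta_i$, $B=b\sin\theta_i$, absurd for distinct $\theta_i$), so each gap contains at least one interior Dirac $\phi\in S_{u_0}$, and these accumulate whenever the touches do. Choosing two adjacent gaps $(\theta_{n+1},\theta_n)$, $(\theta_n,\theta_{n-1})$ and setting $v_1\in H^1_0(\theta_{n+1},\theta_n)$, $v_2\in H^1_0(\theta_n,\theta_{n-1})$ solving $v_i''+v_i=\delta_{\phi}$ with the corresponding interior Diracs as sources, each $v_i>0$ vanishes at the three max touches $\theta_{n\pm 1},\theta_n$, so the constraint $v\le\lambda(u_0-b)=0$ at touches is met for free. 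A linear combination $v=\alpha v_1+\beta v_2$ with $(\alpha,\beta)\ne 0$ chosen so that $j'(u_0)(v)=0$ then lies in $T_{u_0}$; disjoint supports together with Poincar\'e on the shrinking gaps and~(i) give $j''(u_0)(v,v)<0$, the desired contradiction. Combining the three steps, $\{u_0=a\}$ and $\{u_0=b\}$ are each closed subsets of $\T$ with empty interior and no accumulation, hence finite.
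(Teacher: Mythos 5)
Your treatment of $\{u_0=b\}$ contains the decisive gap. You correctly observe that the touches of $\{u_0=b\}$ are tangential ($u_0'(\theta_n^\pm)=0$, no Dirac mass), but you then verify the cone constraint $v\leq\lambda(u_0-b)$ only \emph{at} the touch points. The constraint is pointwise on all of $\T$, and near a tangential touch $b-u_0$ vanishes to second order while your Green functions $v_i$ vanish only to first order (their derivatives jump at the gap endpoints), so $v_i/(b-u_0)\to+\infty$ and no finite $\lambda$ satisfies $v_i\leq\lambda(u_0-b)$; equivalently, $u_0+tv_i$ violates $u\leq b$ for every $t>0$. Hence your $v$ is not admissible and \eqref{eq:euler2} cannot be invoked. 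This is not a repairable detail: your step uses only hypothesis (i) and never (iii), yet the counterexample $G(u,p)=-\frac{1}{2}(u^2+p^2)$ of Section \ref{conds-g}, which satisfies (i), (ii.1) and the parity assumptions but not (iii), admits minimizers with infinitely many edges tangent to the circle $r=1/b$, i.e.\ with $\{u_0=b\}$ infinite. Any correct proof must use (iii); the paper does so by inserting an explicit competitor $u_n$ (two lines tangent to $r=1/b$ over $(0,2\eps_n)$) and comparing $j(u_n)\geq j(u_0)$ directly, exploiting the parity of $G(\cdot,p)$, the resulting sign $G_p(\cdot,p)<0$ for $p>0$ near $(b,0)$, and $G_u\geq 0$ near $(b,0)$.

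Your step for $\{u_0=a\}$ has a related, if less dramatic, problem. The combination $v=c_1v_1+c_2v_2$ killing $\int v\,d\mu_a$ forces $c_1c_2<0$ when both integrals are positive; on the interval carrying the negative coefficient, $v<0$ at interior points where $u_0=a$ (near an accumulation point of $\{u_0=a\}$ the middle points $\sigma_2,\tau_2$ of your triples will in general be touch points), so $v\geq\lambda(u_0-a)$ fails, and the negative Dirac masses of $c_i(v_i''+v_i)$ at those points need not be dominated by $\lambda(u_0''+u_0)$. More tellingly, this step never uses hypothesis (ii), whereas the paper's case (a.2) --- the configuration of chords of the outer circle with vertices accumulating on $\{u_0=a\}$, where $F_a$ need be neither an interval nor a set of isolated points, so that your dichotomy ``arcs versus isolated touches'' is not even exhaustive --- is handled by a first-order argument with the explicit admissible perturbation $v_n''+v_n=-(u_0''+u_0)$ on $(0,\eps_n)$, $v_n=0$ outside, which relies essentially on the evenness of $G(a,\cdot)$ together with (ii.1) or (ii.2). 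Your ``no arcs'' step is essentially sound, but the other two steps need to be replaced by arguments that actually engage hypotheses (ii) and (iii).
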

\begin{proof}
Assume by contradiction there exists $\theta_0$ an accumulation point of 
$\{(u_0-a)(u_0-b)=0\}$.\\
\textbf{(a)} \textbf{First case : $u_0(\theta_0)=a$.} Without loss of generality we can assume that $\theta_0=0$ and that there exists a sequence $\{\eps_n>0\}$ of $S_{u_0}$ tending to $0$, 
with $u_0(\eps_n)=a$ and $S_{u_0}\cap(0,\eps_n)\neq\emptyset$. \\
(a.1) First subcase: assume by contradiction that there exists a sequence $\theta_n\in S_{u_0}\cap(0,\eps_n)$ such that $\theta_n\to\theta_0$ and $a<u_0(\theta_n)<b$.
As $\{\theta,\, a<u_0(\theta)<b\}$ is open, there exists an open connected set $\omega_n$,
$\theta_n\in\omega_n\subset\{a<u_0<b\}$, $diam(\omega_n)\to0$, $u_0(\partial\omega_n)=a$. 
Consider the function $v_n$ given by
$v_n\in H^1(\T)$, $v_n''+v_n=u_0''+u_0=\sum_{i=1}^{N_i} \alpha_i\delta_{\theta_n^i}$ in $\omega_n$ (where $N_i$ is finite), 
$v_n=0$ in $\omega_n^c$ (from Theorem \ref{th:concave}, $u_0''+u_0$ is a finite sum of Dirac masses in $\omega_n$). It follows that for $n$ 
large $v_n$ is admissible (again using \eqref{zeta_0}, and also that $u_0=a$ on $\partial\omega$) for Proposition \ref{prop:euler}, since $u_0''+u_0$ has some Dirac masses in $\partial\omega_n$. 
Then we can apply the second order necessary condition, as in (b), Section \ref{ssect:concave},  which leads to a contradiction, since $diam(\omega_n)$ is going to 0.\\~\\
\begin{minipage}{80mm}
\figpt 0:(0,0)
\figpt 1:(-50,86)

\figpt 3:(0,100)
\figpt 4:(25,94)
\figpt 5:(48,85)
\figpt 6:(72,69)
\figpt 7:(79,61)
\figpt 8:(90,40)
\figpt 9:(98,20)
\figpt 10:(99,10)
\figpt 11:(100,0)

\figpt 12:(15,40)
\figpt 13:(62,20)


\psbeginfig{}

\psset(width=0.3)
\psarccirc 0; 100(0,97)

\psset(width=0.8)
\psline[3,4,5,6]
\psarccirc 0; 100(37.6,43.6)
\psline[7,8,9,10,11]
\psset(width=0.2)
\psarccirc 0; 40(43.6,90)
\psarccirc 0; 60(11.5,37.6)
\psset(dash=8)
\psline[0,3]
\psline[0,4]
\psline[0,5]
\psline[0,6]
\psline[0,7]
\psline[0,8]
\psline[0,9]
\psline[0,10]
\psline[0,11]

\psendfig
\figvisu{\figBoxA}{\bf Figure 3: Case (a.1)}{
\figwritene 8:{$\theta_n^1$}(2)
\figwritene 4:{$\theta_{n-1}^1$}(2)
\figwritene 5:{$\theta_{n-1}^2$}(2)
\figwritee 11:{$\theta_0$}(2)
\figwriten 3:{$\eps_n$}(2)
\figwritew 3:{$\frac{1}{a}$}(15)
\figwrite [12]{$\omega_{n-1}$}
\figwrite [13]{$\omega_n$}
}
\centerline{\box\figBoxA}
~\\
\end{minipage}
\begin{minipage}{80mm}
\figpt 0:(0,0)
\figpt 1:(-50,86)

\figpt 3:(0,100)
\figpt 4:(25,97)
\figpt 5:(72,69)
\figpt 6:(79,61)
\figpt 7:(98,20)
\figpt 8:(99,10)
\figpt 9:(100,0)

\figpt 10:(10,105)
\figpt 11:(75,70)
\figpt 12:(17,39)
\figpt 13:(60,20)


\psbeginfig{}

\psset(width=0.3)
\psarccirc 0; 100(0,97)

\psset(width=0.8)
\psline[4,5]
\psline[6,7,8,9]
\psarccirc 0; 100(75.5,90)
\psarccirc 0; 100(37.6,43.8)
\psset(width=0.2)
\psarccirc 0; 40(43.6,75.5)
\psarccirc 0; 60(11.5,37.6)
\psset(dash=8)
\psline[0,3]
\psline[0,4]
\psline[0,5]
\psline[0,6]
\psline[0,7]
\psline[0,8]
\psline[0,9]

\psendfig
\figvisu{\figBoxA}{\bf Figure 4: Case (a.2)}{
\figwritee 7:{$\theta_n$}(2)
\figwritee 9:{$\theta_0$}(2)
\figwriten 3:{$\eps_n$}(2)
\figwritew 3:{$\frac{1}{a}$}(15)
\figwritee 10:{$F_a$}(2)
\figwritee 11:{$F_a$}(2)
\figwrite [12]{$\omega_{i-1}$}
\figwrite [13]{$\omega_i$}
}
\centerline{\box\figBoxA}
~\\
\end{minipage}
(a.2) Second subcase: $(0,\eps_n)=F_a\cup_i\omega_i$ with $F_a=\{u_0=a\}\cap(0,\eps_n)$ relatively closed and $\omega_i\subset (0,\eps_n)$ open intervals with $u_0(\partial\omega_i)=a$ and $u_0''+u_0=0$ in $\omega_i$. 
Let $v_n$ given by
\[
v_n''+v_n = -(u_0''+u_0)\quad\mbox{ in }\quad (0,\eps_n),\qquad
v_n=0\quad\mbox{ in }\quad (0,\eps_n)^c.
\]
We have $v_n>0$ on $(0,\eps_n)$: indeed, as $(u_0+v_n)''+(u_0+v_n)=0$ in $(0,\eps_n)$ (so $u_0+v_n$ represents a line), $u_0+v_n=u_0$ in $\partial(0,\eps_n)$ and $u_0$ represents a convex curve, it follows that $u_0< u_0+v_n$ on $(0,\eps_n)$ ($v_n\not\equiv 0$ because $S_{u_0}\cap(0,\eps_n)\neq\emptyset$).
Then for $n$ large and $t\geq 0$ small the function $u_n=u_0+tv_n$ satisfies $a\leq u_n\leq b$, $u_n''+u_n\geq0$ (we use that $u_0''+u_0$ has positive Dirac masses at $0$ and $\eps_n$). Therefore, we can use the first order inequality (see Remark \ref{Tu0})
$j'(u_0)(v_n)\geq0$, which gives 
\[
0\leq j'(u_0)(v_n)=\int_{\T} G_u(u_0,u_0')v_n + G_p(u_0,u_0')v_n'.
\]
If (ii) holds we have 
$\int_{F_a}G_u(u_0,u_0')v_n + G_p(u_0,u_0')v_n'\leq 0$ because $u_0=a$ and $u_0'=0$ a.e.  in $F_a$,  $G_u(a,0) \leq 0$ and $G_p(a,0)=0$ (as $p\to G_p(a,p)$ is odd). So, if one of (ii) conditions holds, we have
\[
0\leq j'(u_0)(v_n) \leq \sum_i \int_{\omega_i} G_u(u_0,u_0')v_n + G_p(u_0,u_0')v_n'.
\]
Note that we have 
$\int_{\omega_i} G_u(u_0,u_0')u_0' + G_p(u_0,u_0')u_0''=
\left[G(u_0,u_0')\right]_{\partial\omega_i}=0$, since \linebreak$u_0'(\partial^+\omega_i)=-u_0'(\partial^-\omega_i)$ (where $\omega_i=(\partial^-\omega_i,\partial^+\omega_i)$) and $G(a,\cdot)$ is even. Therefore, from
\[
v_n=\alpha_{n,i} u_0 + \beta_{n,i} u_0'\quad\mbox{\it in}\quad\omega_i,\qquad
\alpha_{n,i}=\frac{\int_{\omega_i}u_0 v_n}{\int_{\omega_i}u_0^2} > 0,\quad
\beta_{n,i}=\frac{\int_{\omega_i}u_0' v_n}{\int_{\omega_i}|u_0'|^2},
\]
we get that if (ii) holds then
\begin{equation}\label{g1}
0\leq j'(u_0)(v_n)\leq 
\sum_i \alpha_{n,i} \int_{\omega_i} G_u(u_0,u_0')u_0 + G_p(u_0,u_0')u_0'. 
\end{equation}
We now prove that  
\begin{equation}\label{vn->0}
v_n\to 0,\quad
\overline{u}_n\to a\quad \mbox{\it in $W^{1,\infty}(\mathbb T)$ as $n\to\infty$},
\end{equation}
where $\overline{u}_n=u_0$ in $(0,\eps_n)$ and $\overline{u}_n=a$ in $(0,\eps_n)^c$. 
Indeed, the statement for $\overline{u}_n$ follows from Corollary \ref{cor:borne-u'} because  we have $\|\overline{u}_n-a\|_{L^\infty}\to 0$ as $n\to\infty$ (from $|u_0(\theta) - a|\leq \sqrt{\eps_n}\|u'_0\|_{L^2}$ for $\theta\in(0,\eps_n)$ and $\overline{u}_n''+\overline{u}_n\geq 0$). Next, from 
$(\overline{u}_n+v_n)''+(\overline{u}_n+v_n)=0$ in $(0,\eps_n)$ and
$\overline{u}_n+v_n=a$ in $(0,\eps_n)^c$, using again Corollary \ref{cor:borne-u'}, we find out that
$\|(\overline{u}_n+v_n)-a\|_{W^{1,\infty}(\mathbb T)}\to0$, which proves the statement for $v_n$.

Assume (ii.1) holds. 
We have $\overline{u}_n\to a$ in $W^{1,\infty}(\T)$ as $n\to\infty$, so $G_p(u_0,u_0')=o(1)$ as 
$n\to\infty$, and then \\
\[
0\leq j'(u_0)(v_n)\leq 
\sum_i \alpha_{n,i} \int_{\omega_i}\left(G_u(a,0)u_0 + o(1)\right),
\]
which is impossible as $n\to\infty$ because $G_u(a,0)<0$ and $\alpha_{n,i}>0$.\\
Now assume (ii.2) holds. In this case, we need a second order information: for $n$ large we have 
\begin{eqnarray*}
0&\leq &
j(u_0+v_n) - j(u_0)= j'(u_0)(v_n)+\frac{1}{2}j''(\widetilde{u}_n)(v_n,v_n)\\
&=&\int_0^{\eps_n} G_u(u_0,u_0')v_n + G_p(u_0,u_0')v_n' \\
&&+ \frac{1}{2}\int_0^{\eps_n} G_{uu}(\widetilde{u}_n,\widetilde{u}_n')v_n^2 + 
                                  2G_{up}(\widetilde{u}_n,\widetilde{u}_n')v_nv_n' + 
				  G_{pp}(\widetilde{u}_n,\widetilde{u}_n')|v_n'|^2\\
&\leq&\sum_i \alpha_{n,i} \int_{\omega_i} G_u(u_0,u_0')u_0 + G_p(u_0,u_0')u_0'\\
&&+ \frac{1}{2}\int_0^{\eps_n} (o(1) -\widetilde{K}_{pp})|v_n'|^2.
\end{eqnarray*}
Here $\widetilde{u}_0=u_0+\sigma_n v_n$, $\widetilde{u}_n'=u_0'+\sigma_n v_n'$ with a certain $\sigma_n\in(0,1)$, and we used the estimation (\ref{g1}) for $j'(u_0)(v_n)$, which holds as it uses only the fact $G_u(a,0)\leq 0$, and $G_{pp}(\widetilde{u}_n,\widetilde{u}_n')\leq-\widetilde{K}_{pp}<0$. The existence of $\widetilde{K}_{pp}>0$ follows from hypothesis (i), continuity of $G_{pp}$ at $(a,0)$ and the 
$W^{1,\infty}(\mathbb T)$ convergence in (\ref{vn->0}).
From (ii.2)  we have 
$\int_{\omega_i} G_u(u_0,u_0')u_0 + G_p(u_0,u_0')u_0'\leq 0$ and therefore we get
\[
0\leq 
j(u_0+v_n) - j(u_0)
\leq \frac{1}{2}\int_0^{\eps_n} (o(1)-\widetilde{K}_{pp})|v_n'|^2,
\]
which is impossible for $n$ large and proves that this case is cannot occur.\\
~\\
\textbf{(b) } \textbf{Second case }: 
$u_0(\theta_0)=b$. 
Without loss of generality we may assume $\theta_0=0$ and that there exists a sequence 
$\eps_n>0$ decreasing and tending to $0$ such that 
$u_0(2\eps_n)=b$. From Theorem \ref{th:concave}, it follows that  
$(0,2\eps_n)=\cup_{i\in N_n} \omega_{n,i}\cup \{\theta_n^i,\ i\in N_n\}\cup F_b$ with $F_b=\{u=b\}\cap(0,2\eps_n)$ relatively closed, $N_n\subset\N\cup\{\infty\}$, 
and $u_0''+u_0=0$ in the open intervals $\omega_{n,i}$ (see Figure 5).\\
\begin{minipage}{100mm}
Consider the function $u_n\in H^1(\mathbb T)$ given by
\[
\begin{array}{lll}
u_n=u_0 &\mbox{in}&(0,2\eps_n)^c,\\
u_n=b\cos\theta&\mbox{in}& (0,\eps_n),\\
u_n=b\cos(\theta-2\eps_n)&\mbox{in}&(\eps_n,2\eps_n),
\end{array}
\]
Let $\sigma_n=\sup\{\theta\in(0,\eps_n),\ u_0(\theta)=u_n(\theta)\}$, 
$\tau_n=\inf\{\theta\in(\eps_n,2\eps_n),\ u_0(\theta)=u_n(\theta)\}$. 
We have $u_0=u_n$ in $(0,\sigma_n)\cup(\tau_n,2\eps_n)$.\\
From the assumption of accumulation point, we must have $\sigma_n<\eps_n<\tau_n$. 
\end{minipage}
\begin{minipage}{60mm}
\figpt 0:(0,0)
\figpt 1:(-50,86)

\figpt 3:(56.5,56.5)
\figpt 4:(21,77.5)
\figpt 5:(-3,112)
\figpt 6:(-15,85.2)
\figpt 7:(-36,77)
\figpt 8:(-56.5,56.5)
\figpt 9:(53,63)
\figpt 10:(10,105)
\figpt 11:(75,70)
\figpt 12:(33,40)
\figpt 13:(70,20)
\figpt 14:(50,62)
\figpt 15:(6,47)


\psbeginfig{}

\psset(width=0.3)
\psarccirc 0; 45(75,100)

\psarccirc 0; 80(36,135)
\psset(width=0.8)
\psline[4,6,7,8]
\psarccirc 0; 80(45,75)

\psset(dash=8)
\psset(width=0.8)
\psline[3,5,7]
\psset(width=0.2)
\psline[0,3]
\psline[0,4]
\psline[0,5]
\psline[0,6]
\psline[0,7]
\psline[0,8]
\psline[0,14]

\psendfig
\figvisu{\figBoxA}{\bf Figure 5: Case (b)}{
\figwriten 9:{$\sigma_n$}(4)
\figwritene 3:{$0$}(2)
\figwritese 3:{$\frac{1}{b}$}(15)
\figwritese 4:{$F_b$}(13)
\figwriten 5:{$\eps_n$}(2)
\figwritene 6:{$\theta_{n}^i$}(1)
\figwritenw 7:{$\tau_n$}(2)
\figwritew 8:{$2\eps_n$}(2)
\figwriten 15:{$\omega_n^i$}(2)
}
\centerline{\box\figBoxA}
\end{minipage}
Besides, we have
\begin{equation}\label{u0<un}
0<u_n < u_0,\quad |u_0'| < |u'_n| \quad 
\mbox{\it a.e. in }\quad (\sigma_n,\tau_n).\\
\end{equation}

\noindent
The first inequality is clear. For the other inequality we point out that $0=u_0'<|u'_n|$ a.e. in $F_b$, and 
$|u'_n|^2+u_n^2 = b^2$, $|u'_0|^2 + u_0^2= c^2$ in $\omega_{n,i}\cap(\sigma_n,\tau_n)$, for some $c$ with $b^2\geq c^2$. Therefore
\[
|u'_n|^2 - |u_0'|^2 = b^2 - c^2 + u_0^2 - u_n^2 >0\quad \mbox{ in }\quad \omega_{n,i}\cap(\sigma_n,\tau_n).
\]
We also note that as in the case (a.2), 
$u_n\to b$ in $W^{1,\infty}(\mathbb T)$.
As $u_n$ satisfies $a\leq u_n\leq b$, $u_n''+u_n\geq 0$, and
$p\to G(u,p)$ is even near $(b,0)$ we get
\begin{eqnarray*}
0&\leq&
j(u_n)-j(u_0)=\int_0^{\eps_n}  G(u_n,|u_n'|) - G(u_0,|u_0'|) \\
&=&
\int_{\sigma_n}^{\tau_n}
\Big(G(u_n,|u_n'|) - G(u_n,|u_0'|)\Big) + \Big(G(u_n,|u_0'|)-G(u_0,|u_0'|)\Big) \\
&=&
\int_{\sigma_n}^{\tau_n}
(|u_n'|-|u_0'|)G_p\big(u_n,|u_0'|+t(|u_n'|-|u_0'|)\big) + (u_n-u_0)G_u\big(u_0 + s(u_n-u_0),|u_0'|\big)d\theta,
\end{eqnarray*}
with $0<t,s<1$. But from the parity of $p\mapsto G(\cdot,p)$ and $G_{pp}<0$ near $(b,0)$, it follows that $G_p(\cdot,p)<0$ for $p>0$ near $(b,0)$. Then from the assumption $G_u\geq0$ near $(b,0)$ the last inequality leads to a contradiction, so this case is impossible.
\qed
\end{proof}

\begin{remark}\rm\label{rk:theta}
Theorem \ref{th:polygon-g} can be extended to more general integral operators. More precisely, let $j(u)=\int_{\mathbb T}G(\theta,u,u')$ for some $G$ satisfying  \\
(i) $G$ is a $\mathcal{C}^2$ function, $p\mapsto G(\theta,u,p)$ is even and 
$G_{pp}(\theta,u_0,u'_0)<0$, $\forall\;\theta\in\mathbb T$,\\
(ii) $G_\theta(\theta,a,p)=0$ and $G_u(\theta,a,0)< 0$,\ for all $\theta\in\mathbb T$,\\
(iii) $G_u(\theta,u,p)\geq0$ near $(\theta,b,0)$, for all $\theta\in\mathbb T$,\\
where $u_0$ is a solution of problem (\ref{eq:min-g}). Then $S_{u_0}$ is finite, i.e. $\Omega_{u_0}$ is a polygon.

The proof of this results is very similar to the proof of Theorem \ref{th:polygon-g}, except for the analysis on the boundary $\{u_0=a\}$, which requires certain particular estimations.
\end{remark}

\subsection{Proof of Theorem \ref{th:polygon-g-bis}}
Conditions of Theorem \ref{th:concave} are satisfied, so it's enough to prove:
\begin{prop}\label{prop:concave-bis}
Assume the conditions (i), (ii) of Theorem \ref{th:polygon-g-bis} hold.
Then, for any solution $u_0$ of (\ref{eq:min-g}), and for $I=(\gamma_1,\gamma_2)\subset\{a<u_0<b\}$, there exists $n_0\in\N$ such that
\[
u_0+u_0''=\sum_{1\leq n\leq n_0}\alpha_n\delta_{\theta_n}\quad\mbox{in}\quad I
,\quad \alpha_n>0.
\]
\end{prop}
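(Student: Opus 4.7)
We argue by contradiction, refining the strategy of Theorem \ref{th:concave} at the degenerate set $\{u=a\}$ where the strict concavity of $G$ in $p$ is lost. First we reduce to a single boundary case: on any compact subinterval $K\subset I$, continuity of $u_0$ yields $u_0\geq a+\delta>a$ on $K$, so hypothesis (i) gives the uniform bound $|G_{pp}(u_0(\theta),u_0'(\theta))|\geq c_0>0$ on $K$. The proof of Theorem \ref{th:concave}, cases (a) and (b), then rules out accumulation of $S_{u_0}$ in the interior of $K$. Consequently, accumulation of $S_{u_0}\cap I$ can only occur at an endpoint $\gamma_1$ or $\gamma_2$; when $u_0(\gamma_i)\neq a$ the same local argument extends to a neighborhood of $\gamma_i$ (in particular when $u_0(\gamma_i)=b$ since $G_{pp}<0$ at $u=b$ by (i)). It remains to exclude accumulation at an endpoint where $u_0=a$, which we take to be $\gamma_1$.

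Suppose for contradiction there is a strictly decreasing sequence $\theta_n\to\gamma_1^+$ with $\theta_n\in S_{u_0}\cap I$. By the previous step, $u_0''+u_0=\sum_m\alpha_m\delta_{\theta_m}$ locally in $I\setminus\{\gamma_1\}$, and on each interval between consecutive corners $u_0=R_m\cos(\theta-\phi_m)$ (these are the straight segments of $\partial\Omega_{u_0}$). Following the construction used in Theorem \ref{th:concave}(b), define $v_n\in H^1_0(\theta_{n+1},\theta_{n-1})$ by $v_n''+v_n=\delta_{\theta_n}$ and extend by zero. Since $\mathrm{supp}(v_n)\subset I\subset\{a<u_0<b\}$, the admissibility of $v_n$ for the second-order condition of Proposition \ref{prop:euler} follows by choosing $\lambda\ll 0$, as in Theorem \ref{th:concave}(b).

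The main obstacle is that $G_{pp}(a,\cdot)\equiv 0$ destroys the uniform coercivity used in Theorem \ref{th:concave}. We work one order higher in $u_0-a$: using the $\mathcal C^3$ regularity of $G$ and hypothesis (i),
\[
G_{pp}(u_0,u_0')=(u_0-a)\,G_{upp}(a,u_0')+O((u_0-a)^2),\qquad G_{up}(u_0,u_0')=G_{up}(a,u_0')+O(u_0-a).
\]
Integrating by parts the cross term $\int 2G_{up}(u_0,u_0')\,v_nv_n'\,d\theta$ using $u_0''=-u_0$ on smooth segments and the jump of $u_0'$ at $\theta_n$, and using that $p\mapsto G(a,p)$ is even (so $G_{up}(a,\cdot)$ is odd), the leading terms rearrange into an expression in $v_n^2$ and $v_n'^2$ whose coefficients involve $G_{upp}(a,u_0')$ and $G_{up}(a,u_0')$. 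Hypothesis (ii), $pG_{up}(a,p)=z(p)G_{upp}(a,p)$ with $z\geq 0$, is precisely the identity that forces these contributions to combine with a definite sign, rather than cancelling or flipping.

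Finally, we quantify using the geometric identity $u_0^2+u_0'^2=R_m^2$ on each smooth segment (consequence of $u_0''+u_0=0$), together with the Green's-function scaling $\|v_n\|_{L^\infty}=O(\varepsilon_n)$ and $\|v_n'\|_{L^\infty}=O(1)$, where $\varepsilon_n=\theta_{n-1}-\theta_{n+1}\to 0$. Inserting these into the expanded quadratic form yields $j''(u_0)(v_n,v_n)<0$ for $n$ large, contradicting the second-order necessary condition. The heart of the argument is that hypotheses (i) and (ii) are calibrated so that, despite the degeneracy $G_{pp}(a,\cdot)=0$, the combined first-order contribution of $G_{pp}$ and $G_{up}$ in the expansion at $u=a$ still dominates with the correct sign; obtaining this cancellation structure is the only nontrivial new input beyond the proof of Theorem \ref{th:concave}.
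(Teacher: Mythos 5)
Your reduction to the single problematic case (accumulation of $S_{u_0}\cap I$ at a point where $u_0=a$) and your choice of test functions $v_n$ with $v_n''+v_n=\delta_{\theta_n}$ match the paper. But there is a genuine gap in how you handle that case: you rely only on the second-order condition, and the expansion $G_{pp}(u_0,u_0')=(u_0-a)\,G_{upp}(a,u_0')+o(u_0-a)$ gives you nothing unless you already know that $u_0'(0^+)>0$ \emph{and} $G_{upp}\bigl(a,u_0'(0^+)\bigr)<0$. Hypothesis (i) only yields $G_{upp}(a,\cdot)\le 0$ (as the $u$-derivative at the boundary of the region where $G_{pp}<0$), not strict negativity, and nothing a priori prevents $u_0'(0^+)=0$; in either degenerate situation the ``coercive'' term $\int G_{pp}|v_n'|^2$ is $o(\theta_n\tau_n^2\tau_{n-1})$ with no controlled sign, and no contradiction follows. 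The mechanism you propose for hypothesis (ii) --- that the identity $pG_{up}(a,p)=z(p)G_{upp}(a,p)$ makes the cross terms in the quadratic form combine with a definite sign --- is not how that hypothesis can be used: note that your argument never invokes $G_u(a,p)<0$, yet this condition is indispensable (the counterexamples of Section \ref{ssect:sharpness}, Condition (ii), satisfy everything else and the minimizer is the circle $u_0\equiv a$).

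The missing step is a \emph{first-order} argument. The paper first plugs $v_n$ into the first-order condition, integrates by parts using $u_0''+u_0=0$ between corners, and obtains
\begin{equation*}
0=-[G_p(u_0,u_0')]_{\theta_n}v_n(\theta_n)+\int\bigl(G_u+G_{pp}u_0-G_{up}u_0'\bigr)v_n .
\end{equation*}
Dividing by $\int v_n$ and using the Lachand-Robert--Peletier series argument ($\sum_n(\tau_n+\tau_{n-1})/\sum_{j\ge n}\tau_j=+\infty$ with $\tau_k=\theta_k-\theta_{k+1}$) to kill the jump term, one gets the pointwise identity $G_u(a,u_0'(0^+))=G_{up}(a,u_0'(0^+))\,u_0'(0^+)$. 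Only now does hypothesis (ii) act: $G_u(a,\cdot)<0$ forces $G_{up}(a,u_0'(0^+))u_0'(0^+)<0$, hence $u_0'(0^+)>0$, and the relation $pG_{up}=zG_{upp}$ with $z\ge 0$ then transfers the strict sign to $G_{upp}(a,u_0'(0^+))<0$. With these two facts in hand, the second-order computation you sketch does close (the leading term of $\int G_{pp}|v_n'|^2$ is $u_0'(0^+)G_{upp}(a,u_0'(0^+))\bigl(\tau_n^2\tau_{n-1}^2+\theta_{n+1}\tau_n\tau_{n-1}^2+\theta_n\tau_n^2\tau_{n-1}\bigr)$, which dominates the $o(1)\tau_n^2\tau_{n-1}^2$ remainders with a strictly negative sign). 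Without the first-order step your proof cannot be completed as written.
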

\begin{proof}
The proof follows closely the one of Theorem \ref{th:concave}. In fact the proof of 
steps (a) and (c) are identical, since we have 
$G_{pp}(u_0,u_0') \leq- K_{pp}(\alpha)< 0$ if $u_0\geq a+\alpha$, $\alpha>0$. Let us deal with the step (b), which needs a new proof.\\
\textbf{(b)}
Assume by contradiction that there exists $\theta_0$ an accumulation point of $S_{u_0}\cap I$ with 
$u_0(\theta_0)=a$ (see Figure 2). Without restriction we may take $\theta_0=0$ and assume there exists a decreasing sequence $\{\theta_n>0\}$ tending to $0$ such that 
$u_0''+u_0=\sum_{n\in \N}\alpha_n\delta_{\theta_n}$ and $u_0>a$ in $\{0<\theta\ll 1\}$ and $\alpha_n>0$. 
Like in \cite{LP01New}, we consider $v_n\in H_0^1(\mathbb T))$ given by
\[
 0\leq v_n(\theta)=\left\{
\begin{array}{lll}
\sin(\theta-\theta_{n+1})\sin(\theta_{n-1}-\theta_n)& in & (\theta_{n+1},\theta_n),\\
\sin(\theta_n-\theta_{n+1})\sin(\theta_{n-1}-\theta)& in & (\theta_n,\theta_{n-1}),\\
0,& in & (\theta_{n+1},\theta_{n-1})^c.
\end{array}
\right.
\]
Since $u_0''+u_0$ has some Dirac mass at $\{\theta_{n+1},\theta_{n},\theta_{n-1}\}$, and $u_0>a$ in $\{0<\theta\ll 1\}$, the function $v_n$ is admissible for the first and second order necessary conditions of
Proposition \ref{prop:euler}.  
From the first order condition we get
\begin{eqnarray*}
0&=&
\int_{\T} G_u(u_0,u_0')v_n + G_p(u_0,u_0')v'_n\\
&=&-\left[G_p(u_0,u_0')v_n\right]_{\theta_n}+\int_{\T\setminus{\theta_n}} \left(G_u(u_0,u_0')-\frac{d}{d\theta}G_p(u_0,u_0')\right)v_n\\
&=&
-[G_p(u_0,u_0')]_{\theta_n}v_n(\theta_n) + 
\int_{\T\setminus{\theta_n}} \Big(G_u(u_0,u_0') + G_{pp}(u_0,u_0') u_0 - G_{up}(u_0,u_0')u_0'\Big)v_n,
\end{eqnarray*}
since $u_0''+u_0=0\;\;on\;\;(\theta_{n+1},\theta_{n-1})\setminus\{\theta_n\}$ ($[\cdot]_\theta$ denotes the jump at $\theta$).\\
We now prove the following consequence:
\begin{equation}\label{eq:Gu-Gup=0}
G_u(a,u_0'(0^+)) - G_{up}(a,u_0'(0^+)) u_0'(0^+) = 0.
\end{equation}
We will prove (\ref{eq:Gu-Gup=0}) using the technique used in \cite{LP01New} for a particular functional 
$G(u,p)$. First we point out that
\begin{eqnarray*}
&&\lim_{n\to\infty}
\frac{\int_{\T} (G_u(u_0,u_0') + G_{pp}(u_0,u_0') u_0 - G_{up}(u_0,u_0')u_0')v_n}
{\int_{\T}v_n}\\
&=&G_u(a,u_0'(0+)) - G_{up}(a,u_0'(0^+))u_0'(0^+) 
=\lim_{n\to\infty}\frac{[G_p(u_0,u_0')]_{\theta_n}v_n(\theta_n)}{\int_{\T}v_n}\leq 0,
\end{eqnarray*}
where we have used that fact that $p\to G_p(u,p)$ is decreasing (consequence of $G_{pp}\leq 0$), $G_{pp}(a,p)=0$ and $[u'_0]_{\theta_n}>0$.\\
If by absurd (\ref{eq:Gu-Gup=0}) does not hold, there exists a constant $c>0$ such that 
\begin{equation}\label{eq:>c}
-\frac{[G_p(u_0,u_0')]_{\theta_n}v_n(\theta_n)}{\int_{\T}v_n} \geq c >0
\end{equation}
for $n$ large. Since $G_{pp}(a,\cdot)=0$ we have
\begin{eqnarray*}
[G_p(u_0,u_0')]_{\theta_n}
&=& G_p(u_0(\theta_n),u_0'(\theta_n^+)) - G_p(u_0(\theta_n),u_0'(\theta_n^-)) \\
&=&[u_0']_{\theta_n} G_{pp}(u_0(\theta_n),\widetilde{u}_{0n}')
 = [u_0']_{\theta_n} (G_{pp}(u_0(\theta_n),\widetilde{u}_{0n}')-G_{pp}(u_0(0),\widetilde{u}_{0n}'))\\
&=&\theta_n[u_0']_{\theta_n} \int_0^1 G_{upp}(u_0(t\theta_n),\widetilde{u}_{0n}')u'_0(t\theta_n)dt,
\end{eqnarray*}
with $\widetilde{u}_{0n}'$ between $u_0'(\theta_n^+)$ and $u_0'(\theta_n^-)$. We point out that
${\displaystyle
\frac{\int_{\T} v_n}{\theta_n v_n(\theta_n)} = 
\frac{1}{2}\frac{\tau_n + \tau_{n-1}}{\sum_{j=n}^\infty\tau_j}(1 + o(1))} 
$
and the series
${\displaystyle
\sum_n\frac{\tau_n + \tau_{n-1}}{\sum_{j=n}^\infty\tau_j}=+\infty}$,
where $\tau_k = \theta_{k} - \theta_{k+1}$, (from an elementary lemma on series, see \cite{LP01New}). Therefore, from (\ref{eq:>c}) 
we obtain 
\begin{equation*}
-\frac{[G_p(u_0,u_0')]_{\theta_n}v_n(\theta_n)}{\int_\T v_n}
= - [u_0']_{\theta_n} \left(\int_0^1 G_{upp}(u_0(t\theta_n),\widetilde{u}_{0n}')u'_0(t\theta_n)dt\right) \frac{\theta_nv_n(\theta_n)}{\int_\T v_n}\geq c.
\end{equation*}
As $\int_0^1 G_{upp}(u_0(t\theta_n),\widetilde{u}_{0n}')u'_0(t\theta_n)dt$ is uniformly bounded w.r.t. to $n$, with a summation, we get:
\begin{eqnarray*}
\infty >C \sum_n [u_0']_{\theta_n}&\geq& - \sum_n [u_0']_{\theta_n} \left(\int_0^1 G_{upp}(u_0(t\theta_n),\widetilde{u}_{0n}')u'_0(t\theta_n)dt\right) \\
&\geq& \sum_n c\frac{\int_\T v_n}{\theta_nv_n(\theta_n)}
\geq
\sum_n \frac{c}{2}\frac{\tau_n + \tau_{n-1}}{\sum_{j=n}^\infty\tau_j}(1+o(1)),\\ 
&=& \infty.
\end{eqnarray*}
The contradiction proves (\ref{eq:Gu-Gup=0}). The important corollary of (\ref{eq:Gu-Gup=0}) is 
\begin{equation}\label{Gupp<0}
u_0'(0^+)>0,\quad G_{upp}(a,u_0'(0+))<0. 
\end{equation}
Indeed, from (\ref{eq:Gu-Gup=0}) and (ii) it follows that 
$0\neq G_{up}(a,u_0'(0^+)) u_0'(0^+)<0$. 
As $u_0(0)\leq u_0(\theta)$ implies $u_0^+(0)\geq 0$, it follows that $u_0'(0^+)>0$ and $G_{up}(a,u_0'(0^+))<0$. Using once more (ii) gives
\[
0>G_u(a,u_0'(0^+))=G_{up}(a,u_0'(0^+)) u_0'(0^+) = 
z(a,u_0'(0^+)) G_{upp}(a,u_0'(0^+)),
\]
which proves (\ref{Gupp<0}). 

Using $v_n$ in the second order condition of Proposition \ref{prop:euler} gives
\begin{eqnarray}
0&\leq&
\int_{\theta_{n+1}}^{\theta_{n-1}} 
G_{uu}(u_0,u_0')v_n^2 + G_{up}(u_0,u_0')(v_n^2)'+G_{pp}(u_0,u_0') |v_n'|^2 	\nonumber \\
&=&
-[G_{up}(u_0,u_0')]_{\theta_n} v_n(\theta_n)^2 			\nonumber\\
&&+
\int_{\theta_{n+1}}^{\theta_{n-1}} 
\left[
G_{uu}(u_0,u_0') - G_{uup}(u_0,u_0')u_0' + G_{upp}(u_0,u_0')u_0
\right]v_n^2+
G_{pp}(u_0,u_0') |v_n'|^2				\nonumber \\
&\sim&
o(1)\tau_n^2\tau_{n-1}^2 + 
\int_{\theta_{n+1}}^{\theta_{n-1}} G_{pp}(u_0,u_0')|v_n'|^2.	\label{g2}
\end{eqnarray}
Since $G_{pp}(a,0)=0$, we need further developments allowing to use (\ref{Gupp<0}). Namely
\begin{eqnarray*}
G_{pp}(u_0,u_0')
&=& G_{upp}(a,u_0')u_0'(0^+)\theta(1 + o(1)),\\
\int_{\theta_{n+1}}^{\theta_{n-1}} G_{pp}(u_0,u_0') |v_n'|^2
&=& \int_{\theta_{n+1}}^{\theta_{n-1}} G_{upp}(a,u_0')u_0'(0^+) \theta|v_n'|^2(1+o(1))\\
&=& u_0'(0^+)G_{upp}(a,u_0'(0^+))\int_{\theta_{n+1}}^{\theta_{n-1}}\theta|v_n'|^2(1 + o(1))\\
&\sim& u_0'(0^+)G_{upp}(a,u_0'(0^+)) 
    (\tau_n^2\tau_{n-1}^2 + \theta_{n+1}\tau_n\tau_{n-1}^2 + \theta_n\tau_n^2\tau_{n-1}).
\end{eqnarray*}
From (\ref{Gupp<0}), the last inequality contradicts the second order condition (\ref{g2}) and proves that this case is impossible.\qed
\end{proof}

\begin{prop}\label{prop:boundary-g-bis}
Under the assumptions of Theorem \ref{th:polygon-g-bis} the sets $\{u_0=a\}$ and $\{u_0=b\}$ are finite.
\end{prop}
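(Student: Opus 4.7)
The plan is to follow the structure of Proposition~\ref{prop:boundary-g}, treating the boundaries $\{u_0=a\}$ and $\{u_0=b\}$ separately, with modifications reflecting the hypotheses of Theorem~\ref{th:polygon-g-bis}: the strict concavity $G_{pp}$ is now allowed to vanish on $\{a\}\times[-C(b),C(b)]$, but $G_u(a,\cdot)<0$ uniformly and $p\mapsto G(a,p)$ is even.

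For the boundary $\{u_0=b\}$, hypothesis (iii) of Theorem~\ref{th:polygon-g-bis} coincides with (iii) of Theorem~\ref{th:polygon-g}, and (i) supplies $G_{pp}(b,\cdot)<0$, which is all the case (b) argument of Proposition~\ref{prop:boundary-g} requires; that proof transfers verbatim: replace $u_0$ on $(0,2\eps_n)$ by the two polar line segments $b\cos\theta$ and $b\cos(\theta-2\eps_n)$ joined at a corner at $\eps_n$, verify admissibility, and conclude $j(u_n)<j(u_0)$ using evenness of $G(\cdot,p)$, the sign $G_p(\cdot,p)<0$ for $p>0$ small (from $G_{pp}(b,0)<0$), and $G_u\geq 0$ near $(b,0)$.

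For $\{u_0=a\}$, suppose for contradiction $\theta_0$ is an accumulation point, normalized to $\theta_0=0$. I first rule out that $F_a:=\{u_0=a\}$ contains a nondegenerate interval $(c,d)$: for $v\in C_c^\infty(c,d)$ with $v\geq 0$, $v\not\equiv 0$, the perturbation $u_0+tv$ is admissible for $t>0$ small, so by Remark~\ref{Tu0} one gets $j'(u_0)(v)\geq 0$; but on $(c,d)$, $u_0\equiv a$ and $u_0'\equiv 0$ a.e., and hypothesis (ii) yields $G_u(a,0)<0$ and $G_p(a,0)=0$ (the latter by oddness of $p\mapsto G_p(a,p)$), so $j'(u_0)(v)=G_u(a,0)\int v<0$, a contradiction. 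Hence $F_a$ has empty interior near $0$ and accumulates at $0$ through points of $F_a$. I pick $\eps_n\in F_a$ with $\eps_n\to 0^+$ and define $v_n\in H^1(\T)$ by $v_n''+v_n=-(u_0''+u_0)$ on $(0,\eps_n)$, $v_n\equiv 0$ outside. Then $u_0+v_n$ represents the polar chord joining $(0,1/a)$ to $(\eps_n,1/a)$; by convexity of $\Om_{u_0}$, $v_n\geq 0$, and $v_n\not\equiv 0$ since the accumulation of $F_a$ at $0$ forces $u_0''+u_0$ to carry nontrivial mass on $(0,\eps_n)$. For $t\in(0,1)$ small, $u_0+tv_n$ is admissible (we have $(1-t)(u_0''+u_0)\geq 0$, $u_0+tv_n\geq u_0\geq a$, and $u_0+tv_n\leq b$ for $t$ small), so Remark~\ref{Tu0} yields $j'(u_0)(v_n)\geq 0$.

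The contradiction then comes from a scaling analysis. The convex-chord bound gives $\|u_0-a\|_{L^\infty(0,\eps_n)}=O(\eps_n^2)$, and Corollary~\ref{cor:borne-u'} applied to a suitable localization yields $\|u_0'\|_{L^\infty(0,\eps_n)}=O(\eps_n)$; hence $\|v_n\|_\infty=O(\eps_n^2)$ and $\|v_n'\|_\infty=O(\eps_n)$. Hypotheses (i) and (ii) together force $G_p(a,0)=G_{pp}(a,0)=G_{up}(a,0)=0$, so Taylor expansion around $(a,0)$ gives $G_p(u_0,u_0')=O(\eps_n^2)$ and $G_u(u_0,u_0')=G_u(a,0)+O(\eps_n)$. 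Therefore $|\int G_p(u_0,u_0')v_n'|=O(\eps_n^4)$, while the leading term $G_u(a,0)\int v_n$ is negative of order $\mu_n\eps_n^2$, where $\mu_n>0$ denotes the total mass of $u_0''+u_0$ on $(0,\eps_n)$ (estimated via the Green's function of $\partial_\theta^2+1$ with Dirichlet data). The main obstacle is checking that $\mu_n\gg \eps_n^2$ so that this negative leading contribution dominates; this follows from the accumulation assumption, which keeps $\mu_n$ of order at least $\eps_n$ (as can be computed explicitly when $u_0$ is piecewise chord through points of $F_a$ approaching $0$). Consequently $j'(u_0)(v_n)<0$ for $n$ large, giving the desired contradiction. \qed
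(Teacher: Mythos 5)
Your treatment of $\{u_0=b\}$ and your observation that the case-(b) argument of Proposition \ref{prop:boundary-g} transfers verbatim (since (i) still gives $G_{pp}<0$ near $(b,0)$) agree with the paper. For $\{u_0=a\}$, however, you depart from the paper, which splits into two subcases: (a.1), where corners of $S_{u_0}\cap\{a<u_0<b\}$ accumulate at $\theta_0$, is handled by rerunning the first-order series argument of Proposition \ref{prop:concave-bis}(b) to deduce $u_0'(0^+)>0$ and contradicting $u_0'(0^+)=0$; and (a.2), where $(0,\eps_n)=F_a\cup_i\omega_i$ with $u_0''+u_0=0$ on the $\omega_i$, is handled by the exact decomposition $v_n=\alpha_{n,i}u_0+\beta_{n,i}u_0'$ on each $\omega_i$ together with the evenness identity $\int_{\omega_i}G_u u_0'+G_p u_0''=[G(u_0,u_0')]_{\partial\omega_i}=0$, reducing everything to the sign of $G_u(a,0)$ with no quantitative comparison of error terms. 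You replace both subcases by a single chord perturbation plus a scaling analysis.

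The scaling analysis is where your argument has a genuine gap. Writing $\nu=u_0''+u_0$ and using the Green's function of $\partial_\theta^2+1$ on $(0,\eps_n)$, one has $\int_0^{\eps_n}v_n\approx\int_0^{\eps_n}\tfrac{s(\eps_n-s)}{2}\,d\nu(s)$, so the size of the favourable term $G_u(a,0)\int v_n$ is governed not by the total mass $\mu_n=\nu((0,\eps_n))$ but by where that mass sits: if $\nu$ concentrates within distance $O(\eps_n^2)$ of an endpoint of $(0,\eps_n)$ — which is exactly the kind of behaviour you cannot exclude a priori when corners accumulate at $0$ — then $\int v_n=O(\mu_n\eps_n^3)=O(\eps_n^4)$, the same order as your bound on $\int G_p(u_0,u_0')v_n'$, and no sign conclusion follows. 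So the assertion that the leading term ``is negative of order $\mu_n\eps_n^2$'' is unjustified (and false in general), and identifying ``$\mu_n\gg\eps_n^2$'' as the main obstacle misses the real issue. The argument can probably be rescued by comparing $\int|v_n'|$ with $\int v_n$ rather than comparing both with powers of $\eps_n$: decomposing $v_n$ into the tent functions generated by the individual Dirac masses of $\nu$ shows $\int|v_n'|\le \tfrac{C}{\eps_n}\int v_n$ uniformly in the mass distribution, whence $|\int G_p v_n'|\le C'\eps_n^2\int|v_n'|\le C''\eps_n\int v_n=o(1)\int v_n$ (here one also needs $G_p(a,\cdot)\equiv 0$ on $[-C(b),C(b)]$, which follows from evenness plus $G_{pp}(a,\cdot)\equiv 0$, not merely $G_p(a,0)=0$ as you invoke). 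As written, though, the decisive estimate is missing, and the paper's two mechanisms — the $u_0'(0^+)>0$ contradiction and the exact $\alpha_{n,i}u_0+\beta_{n,i}u_0'$ decomposition — are precisely what let it avoid this quantitative difficulty.
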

\begin{proof} 
The proof of proposition follows closely the proof of Proposition \ref{prop:boundary-g}, except for the case (a.1) which needs another proof as $G_{pp}(u,p)$ is not strictly negative near $u=a$. Note that the case (a.2) of Proposition \ref{prop:boundary-g} when using only condition (ii.1) (which is the case in this proposition) does not require $G_{pp}<0$
(but only $G_u(a,0)<0$ and the parity of $p\to G(a,p)$). Furthermore, the case (b) of Proposition \ref{prop:boundary-g} requires only the (even) parity of $p\to G(u,p)$, $G_{pp}(u,p)<0$ and $G_u\leq 0$ near $(b,0)$.

\noindent
{\bf (a.1)} We assume by contradiction that 0 is an accumulation point of 
$S_{u_0}\cap\{u_0=a\}$, and that there exists a sequence $\{\eps_n>0\}$ tending to $0$, 
with $u_0(\eps_n)=a$ and $S_{u_0}\cap(0,\eps_n)\cap\{a<u_0<b\}\neq\emptyset$ (see Figure 3). Then, there exists an open interval $\omega_n\subset(0,\eps_n)\cap\{a<u_0<b\}$, with $S_{u_0}\cap\omega_n\neq\emptyset$ and $u_0(\partial\omega_n)=a$.
From Theorem \ref{th:concave} it follows that
$S_{u_0}\cap\omega_n$ is finite. Therefore, we can denote $\omega_n=(\theta_{n+1},\theta_{n-1})$ and find $\theta_n\in (\theta_{n+1},\theta_{n-1})\cap S_{u_0}$. We then consider
\[
 0\leq v_n(\theta)=\left\{
\begin{array}{lll}
\sin(\theta-\theta_{n+1})\sin(\theta_{n-1}-\theta_n)& in & (\theta_{n+1},\theta_n),\\
\sin(\theta_n-\theta_{n+1})\sin(\theta_{n-1}-\theta)& in & (\theta_n,\theta_{n-1}),\\
0,& in &  (\theta_{n+1},\theta_{n-1})^c.
\end{array}
\right.
\]
The function $v_n$ is admissible for the first order condition, since $u_0''+u_0$ has some positive Dirac mass on $\partial\omega_n$. We can proceed exactly as in step (b) of Proposition \ref{prop:concave-bis} and we prove that (\ref{Gupp<0}) holds, so $u_0'(0^+)>0$. However, from the fact that 
$\theta_0=0$ is an accumulation point from the right, it's easy to show that $u_0'(0^+)=0$. The contradiction proves the claim.\qed 
\end{proof}

\section{Sharpness of conditions}\label{sect:sharpness}
The conditions of Theorem \ref{th:polygon-g}, \ref{th:polygon-g-bis} are optimal in the sense that there exist counterexamples with $G(u,u')$ not satisfying one of (i)-(iii) and such that the corresponding solution of (\ref{eq:min-g}) is not a polygon. We will provide some counterexamples for Theorems \ref{th:polygon-g}, \ref{th:polygon-g-bis}.

\subsection{Counterexamples for Theorem \ref{th:polygon-g}}\label{conds-g}
\noindent
{\bf Condition (i)}\\
Set $c=(a+b)/2$ and consider $G(u,p)=\frac{1}{2}\left((u - c)^2 + p^2\right)$.
Note that $G$ satisfies (ii.1) as \linebreak[4]$G_u(a,0) =a-c<0$ and (iii) because $G_u(b,0)=b-c>0$.
It does not satisfy (i) because $G_{pp}=1$.  It is obvious that the corresponding
solution of (\ref{eq:min-g}) is not a polygon, but rather the circle $\{u_0=c\}$.\\

\noindent
{\bf Condition (ii)}\\
Consider the function $G(u,p)=\frac{1}{2}(u^2-p^2)$. Of course $G_u(u,p)=u$ and $G_{pp}(u,p)=-2$, so $G(u,p)$ satisfies the conditions (i) and
(iii), but it does not satisfy (ii.1), neither (ii.2). The solution of (\ref{eq:min-g}) corresponding to
this $G(u,p)$ is the circle $u_0=a$. Indeed, for  admissible $u$ we have
\[
j(u)=\frac{1}{2}\int_{\mathbb T} (u^2-|u'|^2) = \frac{1}{2}\int_{\mathbb T} (u+u'')u
\geq\frac{a}{2}\int_{\mathbb T}(u+u'') = \frac{a}{2}\int_{\mathbb T}u \geq \pi a^2 = j(u_0)
\]
which proves that $u_0\equiv a$ is the minimizer of $j(u)$.\\

Another counterexample is using the perimeter. Indeed, if $G(u,p)=-\frac{(u^2+p^2)^{1/2}}{u^2}$ then \linebreak[4]$j(u):=\int_{\mathbb T}G(u,u')d\theta=-P(u)$, where $P(u)$ is the perimeter of the domain inside the curve $\{(1/u(\theta),\theta),\ \theta\in\T\}$.  Therefore, solution of (\ref{eq:min-g}) is
$u_0\equiv a$, which corresponds to the circle $\{r=1/a\}$. On the other side, $G(u,p)$ satisfies the conditions (i) and (iii) but none of conditions (ii). Indeed,
\[
G_u(u,0) = \frac{1}{u^2},\quad G_{pp}(u,p) = - \frac{1}{(u^2+p^2)^{3/2}}.
\]

\noindent
{\bf Condition (iii)}\\
Set $G(u,p)=-\frac{1}{2}(u^2+p^2)$. Since $G_u=-u$ and $G_{pp}=-2$, $G(u,p)$ satisfies (i), (ii.1), but it does not satisfy (iii). A solution of the corresponding minimization problem is $u_0\equiv b$. In fact, any $u_0$ representing a convex polygon with edges tangent to the circle $\{u_0=b\}$ is a solution! We can also add some piece of circle in the boundary. Indeed, first let $v$ be a function such that $1/v$ represents a straight line with $v\leq b$. For such $v$, we have
\[
 v^2+|v'|^2 \leq b^2,
\]
because $v$ satisfies the equation $v+v''=0$, so $((v^2)+(v')^2)'=0$ and therefore
$v^2+|v'|^2=k^2$. For $\theta_0$ such that $v'(\theta_0)=0$ the value of $1/v(\theta_0)$ gives the distance of the origin from the line $v$, so we must have $1/v(\theta_0)\geq 1/b$, which proves the claim. \\
Now, every admissible $u$ can be approached for the $H^1(\mathbb T)$ norm by a sequence of convex polygons $u_n$ satisfying $a\leq u_n \leq b$. Then
\[
j(u)=\lim_{n\to\infty}j(u_n)=-\frac{1}{2}\lim_{n\to\infty}\int_{\mathbb T}(u^2_n+|u'_n|)^2
\geq - \pi b^2
=j(u_0),
\]
which proves that $u_0\equiv b$ is a minimizer. This example provides some optimal shapes having an infinite number of corners inside $\{a<u<b\}$ (because we can have an infinite number of edges, tangent to the circle of radius $1/b$).

\subsection{Counterexamples for Theorem \ref{th:polygon-g-bis}}\label{ssect:sharpness}
With minor modifications, the counterexamples given in (i), (ii) and (iii) above can easily be updated for Theorem \ref{th:polygon-g-bis}. \\

\noindent
{\bf Condition (i)}
Let $c=\frac{1}{2}(a+b)$ and $G(u,p)=\frac{1}{2}((u-c)^2 + (u-a)^2p^2)$.
The function $G$ satisfies the
(ii), (iii) of Theorem \ref{th:polygon-g-bis}. Indeed,
\begin{eqnarray*}
&(ii):& G_u(a,p)=a-c<0,\\
    &&pG_{up}(a,p)=0,\quad G_{upp}(a,p)=0,\mbox{ so } pG_{up}(a,0)=z(p)G_{upp}(a,p)
    \mbox{ with } z= 0.\\
&(iii):& G_u(b,0)=b-c>0.
\end{eqnarray*}
The condition (i) is not satisfied as $G_{pp}=2(u-a)^2$ (note that $G_{pp}(a,p)=0$).
For $u$ admissible we have
\[
j(u) \geq 0 = j(c),
\]
so $u_0\equiv c$ minimizes $j(u)$.\\

\noindent
{\bf Condition (ii)}
Let $G(u,p)$ and $j(u)$ be as in the first example of Condition (ii) of Section \ref{conds-g}. We consider
\[
 \widehat{G}(u,p)=\frac{1}{2}(u^2 - \varphi(u)p^2),\quad
0\leq \varphi\leq 1,\; \varphi\in\mathcal C^\infty(\R),\;
\varphi(u)=
\left\{
\begin{array}{ll}
0,&u\leq a,\\
1,&u\geq b.
\end{array}
\right.
\]
and let $\widehat{j}(u)=\int_{\T}\widehat{G}(u,u')$. The function $\widehat{G}$ satisfies the
(i), (iii) of Theorem \ref{th:polygon-g-bis}, but not (ii). For $u$ admissible we have
\[
 \widehat{j}(u)=\int_{\T}\widehat{G}(u,u')\geq
\int_{T}G(u,u')=j(u)\geq j(a)=\widehat{j}(a),
\]
so $u_0\equiv a$ minimizes $\widehat{j}(u)$.\\

\noindent
{\bf Condition (iii)}
Again, let $G(u,p)$ and $j(u)$ be as in the Condition (iii) of Section \ref{conds-g}.
We consider $\widehat{G}(u,p)=-\frac{1}{2}(u^2 +\varphi(u)p^2)$ and
$\widehat{j}(u)=\int_{\T}\widehat{G}(u,u')$. The function $\widehat{G}$ satisfies the
(i), (ii) of Theorem \ref{th:polygon-g-bis}, but not (iii). Similarly as above,
for $u$ admissible we have
\[
 \widehat{j}(u)\int_{\T}\widehat{G}(u,u')\geq
\int_{T}G(u,u')=j(u)\geq j(b)=\widehat{j}(b),
\]
so $u_0\equiv b$ minimizes $\widehat{j}(u)$. Same remarks as in the previous subsection can be done. We can construct some optimal shapes locally polygonal inside $\{a<u<b\}$ (necessary because of Proposition \ref{prop:concave-bis}), but having an infinite number of corners in $\{a<u<b\}$ (the only condition to be a minimizer is that every edges of these shapes are tangent to the circle of radius $1/b$, and inside the domain $\{\varphi=1\}$).\\

\noindent{\bf Acknowledgments :}\\
The two authors would like to thank professor Michel {\sc Pierre} for introducing them in this interesting subject and for some very helpful comments and discussions about this paper.

\bibliographystyle{plain}

\end{document}